\newcommand{\N}{\mathbb{N}}
\newcommand{\R}{\mathbb{R}}
\renewcommand{\{}{\lbrace}
\renewcommand{\}}{\rbrace}
\newcommand{\ind}{\text{ind}}
\def\hline
\newtheorem{thm}{Theorem}[section]
\newtheorem{thm*}{Theorem}
\newtheorem{proposition}[thm]{Proposition}
\newtheorem{lemma}[thm]{Lemma}
\newtheorem{remark}[thm]{Remark}
\newtheorem{corollary}[thm]{Corollary}
\newtheorem{definition}[thm]{Definition}
\title{
Prescribing Morse scalar curvatures: 
\\
incompatibility of non existence} 
\author
{Martin Mayer \\ 
\small Scuola Superiore Meridionale, Naples, ITALY \\
}  
\date{}
\begin{document}

\maketitle

\begin{abstract}
Given  a closed manifold of positive Yamabe invariant and for instance positive Morse functions upon it, 
the conformally prescribed scalar curvature problem raises the question, 
whether or not such functions can by conformally changing the metric be realised as the scalar curvature of this manifold. 
As we shall quantify, depending on the shape and structure of such functions, 
every lack of a solution for some candidate function 
leads to existence of energetically uniformly bounded solutions for entire classes of related candidate functions. 
\end{abstract}
 
\begin{center}
\small
{
\textit{Keywords: }Conformal geometry, scalar curvature, non compact variational problems, Morse theory
}
\end{center}

\tableofcontents   
\setcounter{tocdepth}{2}

\section{Introduction} 

 Give a closed and smooth Riemannian manifold $M$ of dimension $n\geq 2$, the prescribed scalar curvature problem is concerned with the question, which functions $K$ on $M$ can be prescribed as the scalar curvature $R=R_{g}$ on $M$ for some metric $g$ on $M$, i.e. solvability of $R_{g}=K$.
 
 If we allow to choose any metric, we wish, this problem has been solved in \cite{Kazdan_Warner_Gaussian},\cite{Kazdan_Warner_Scalar}. If we restrict the variation to a conformal class of metrics, i.e. we consider
$M=(M,g_{0})$ and are allowed to only choose among metrics $g$ conformal to $g_{0}$, i.e. $g\in [g_{0}]$ the conformal class of $g_{0}$, we refer to the conformally prescribed scalar curvature problem. 
Geometrically the restriction to a conformal class is natural for $n=2$ by the uniformization theorem and, since the Cotton tensor for $n=3$ and the Weyl tensor for $n\geq 4$ are invariant under a conformal change of the metric. 
Analogously to the Yamabe case of constant $K$ the conformally prescribed scalar curvature problem 
decays into three categories according to the sign of the conformal Yamabe invariant $Y$, cf. \eqref{Yamabe_Invariant}
We focus on $K,Y>0$, 
while we refer to \cite{MM7} for the case of a negative Yamabe invariant and to \cite{Escobar_Schoen_Deformation_Flatness} for $Y=0$
and the references therein.   
\subsubsection*{The Positive Case}

The equation to solve is of variational nature, i.e. solutions to the conformally prescribed scalar curvature problem $R_{g_{u}}=K>0$ correspond to critical points of an energy $J_{K}$, which in this case is positively lower bounded against the positive Yamabe invariant, cf. Section \ref{Section_The_Variational_Formulation}. Secondly the equation is critical with respect to Sobolev embeddings and therefore sublevel sets of $J_{K}$ are generally non compact.
To compensate this, one may consider symmetric or even radial situations to at least recover partial
compactness, 
cf. \cite{Catrina_Symmetric},\cite{Escobar_Schoen_Deformation_Flatness}, 
or pass to ordinary differential equations methods directly, cf. \cite{Chen_Infinite_Energy_Blow_Up}.
Generically however, but still considering $K>0$, we deal with

\subsubsection*{The  Morse case}

Let us briefly discuss a few relevant results, when $K>0$ is a generic Morse function
satisfying the classical, mild non degeneracy condition
$
\{ \nabla K=0 \}\cap \{ \Delta K=0 \} =\emptyset. 
$
\paragraph*{n=2} This is the case of $M\simeq S^{2}$.  
In \cite{Chang_Conformal_Deformations} we find an application of the Mountain Pass theorem under the assumption, 
that $K$ does have at least two maxima, while 
\begin{equation}\label{Laplacian_Positive_At_Non_Maxima}
\Delta K > 0
\; \text{ on } \;
\{ \nabla K=0 \} \setminus \{ K=\max K \}.   
\end{equation}
Connecting the latter maxima, which correspond to critical points at infinity of index 0, by a path, 
leads us to a criticality of index 1, 
which cannot stem from a critical point at infinity due to \eqref{Laplacian_Positive_At_Non_Maxima}.
Secondly, according to  \cite{Chang_S_2} and denoting by 
$$
p=\sharp\{ K=\max K \} 
\; \text{ and } \; 
q=\sharp\{ \nabla K=0 \; : \; m(K,\cdot)=1 \} 
$$ 
the number of maxima and saddle points respectively, we find existence, provided 
\begin{equation}\label{p_q_formula}
p \neq q+1.
\end{equation}
Condition \eqref{p_q_formula} may be expressed as an index counting formula, namely
\begin{equation}\label{Index_Counting_Formula_S_2}
-1 \neq \sum_{x\in \{ \nabla K =0 \} \cap \{ \Delta K < 0 \}}(-1)^{m(K,x)}.
\end{equation}

\paragraph*{n=3}
In case $M \simeq S^{3}$ the analogous results to $M\simeq S^{2}$ were 
proved in \cite{Bianchi_Min_Max} for the Mountain Pass argument and in \cite{Bahri_Coron_S_3}
for the index counting formula \eqref{Index_Counting_Formula_S_2}
\begin{equation}\label{index_countring_formula_n_3}
-1 \neq \sum_{x\in \{ \nabla K=0 \} \cap \{ \Delta K<0 \}  }(-1)^{m(K,x)},
\end{equation}
which like \eqref{Index_Counting_Formula_S_2} 
can be generalized to a total degree of the underlying equation, see \cite{Chang_S2_S3}, and
ensures solvability, cf. Section \ref{Sec_Obstruction_To_Non_Existence}. 
If $M\not \simeq S^{3}$, the problem becomes compact on low sublevels and solvability follows by direct minimization, 
cf. \cite{Escobar_Schoen_Deformation_Flatness}.

\paragraph*{n=4}
We refer to \cite{Bianchi_Min_Max} for a restrictive application of the Mountain Pass theorem and to
\cite{Ben_Ayed_M_4}
for a delicate analysis of the lack of compactness and a resulting index counting formula.
\paragraph{n $\mathbf{\geq} $ 5}
Mountain pass results are available under restrictive assumptions, cf. \cite{Bianchi_Min_Max},\cite{MM4},
while the corresponding index counting formula is never violated, cf. Lemma \ref{lem_index_counting_trivial},
and hence does not yield a solution.
In case of Einstein manifolds and under a pinching assumption 
existence is assured, provided $K$ admits more than one critical point of negative Laplacian. 
There is also an invariant besides the Euler characteristic, which assures existence, 
cf. \cite{Bahri_Invariant},\cite{Yacoub_Invariant_High_Dimensions}, but, as far as we are aware of, 
its computation is difficult.

\subsubsection*{Ambiguity}

Generally and in particular for positive Morse functions, as we shall discuss in Section \ref{Sec_Obstruction_To_Non_Existence}, 
these index counting formulae may fail to yield the existence of solutions and 
in that case we say, that the total degree is zero or trivial, which in our setting is actually always the case, when $n\geq 5$
and, depending on the complexity of the Morse structure of the functions under consideration, leaves many scenaria open, 
when $2\leq n \leq 4$. And at the same time mountain pass or other min-max principles may fail too. 
Here we deal with such situations.

\

To illustrate our main result, Theorem \ref{thm1}, and in order to have a practical example at hand, 
consider the smooth heart $S^{2} \simeq S \subset S^3$, cf.  Figure \ref{fig_smooth_heart} and Section \ref{section_connecting_orbits} for further discussions.

\begin{figure}[H]
\centering
\includegraphics[scale=1]{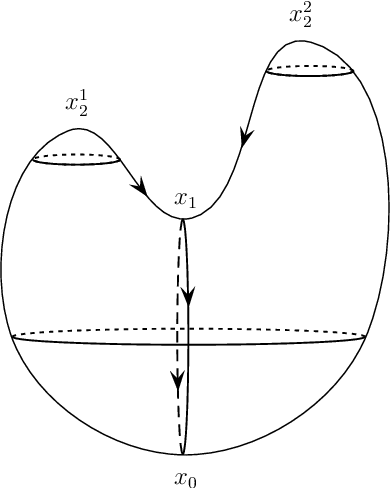}
\caption{The smooth heart as a height function}
\label{fig_smooth_heart}     
\end{figure} 

\noindent
Precisely for a function $K>0$ on $S^{3}$ we imagine for $K^{-1}=\frac{1}{K}$ exactly six critical points, which are
\begin{enumerate}[label=(\roman*)]
\item two local maxima $x_{3}^{1},x_{3}^{2}\in S^{3}\setminus S$, one inside and one outside $S$  of index 
$$m(K^{-1},x_{3}^{1})=m(K^{-1},x_{3}^{2})=3.$$
They are \textbf{not} depicted in Figure \ref{fig_smooth_heart}.  

\item two critical points $x_{2}^{1},x_{2}^{2}\in S $ of index 
$
m(K^{-1},x_{2}^{2})
=
m(K^{-1},x_{2}^{2})=2
$.
\item one critical point $x_{1}\in S$ with index 
$m(K^{-1},x_{1})=1$
\item one unique, global minimum $x_{0}\in S$ of index 
$m(K^{-1},x_{0})=0$. 
\end{enumerate}

\noindent
The height in Figure \ref{fig_smooth_heart} is to visualise their relative values on $S\subset S^{3}$, i.e.
\begin{equation}\label{height_relations}
K^{-1}(x_{0})<K^{-1}(x_{1})<K^{-1}(x_{2}^{1})\leq K^{-1}(x_{2}^{2}).
\end{equation}
We first show conformal solvability of $R=K$ on $S^{3}$ in a case, 
where \eqref{index_countring_formula_n_3} or equivalently
\eqref{index_counting_formula} 
do not. 
\begin{thm}\label{thm2} 
For the smooth heart $S$ on $S^{3}$ suppose, that 
\begin{enumerate}[label=(\roman*)]
\item 	$ K^{-1}(x_{2}^{1})\leq K^{-1}(x_{2}^{2}) $
\item   $ \Delta K(x_{2}^{1}) > 0 > \Delta K(x_{2}^{2}),\Delta K(x_{1})$.
\end{enumerate}
Then there is a solution $y\in \{\partial J_{K} = 0 \}$ with
$
J_{K}(y)\leq J_{K}(\delta_{x_{2}^{2}})
$. 
\end{thm}	
For the \hyperref[proof_thm_2]{proof}, which is based on min-max principle,   
see Section \ref{section_connecting_orbits}. 
Due to the index counting formula \eqref{index_countring_formula_n_3}  and by Theorem \ref{thm2} we then
can solve the corresponding conformally prescribed scalar curvature problem, unless
$$\Delta K(x_{0})<0<\Delta K(x_{2}^{2})$$ 
for the minimum $x_{0}$ of $K^{-1}$ and highest value saddle point $x_{2}^{2}$ of $K^{-1}$ and 
\begin{equation}\label{scenaria}
\; \text{ either } \;
\Delta K(x_{1}),\Delta K(x_{2}^{1})>0
\; \text{  or } \;
\Delta K(x_{1}),\Delta K(x_{2}^{1})<0. 
\end{equation}
These two scenaria are particularly hard,   
since in both cases of \eqref{scenaria} the total degree is zero and the problem is \textit{stable} at infinity, 
as we shall discuss in Section \ref{Sec_Obstruction_To_Non_Existence}, and hence
there are no obviously applicable min-max principles at our disposition. 

Note, that these two scenaria, i.e. the cases of \eqref{scenaria}, 
are essentially distinguished by the sign of the Laplacian at non extremal critical points. 
Interestingly - and perhaps surprisingly - Theorem \ref{thm1} or rather its $3$-dimensional version tells us, 
that \textbf{at most} one of these two cases is not solvable. 
And unfortunately we do not know, which one, since the proof of Theorem \ref{thm1} is by contradiction, thus non constructive. 

\
 
To formalize the notion of Morse functions, which are essentially distinguished by the sign of the Laplacian at their critical points, 
let us start with the following construction. Consider sequences 
$$\underline{k},\overline{k}=(\underline{k}_{l})_{l\in \N},(\overline{k}_{l})_{l\in \N}\subset \R_{+}$$ 
with 
\begin{equation}\label{ordering_the_k_i}
0
<
\underline{k}_{1} \leq \overline{k}_{1}
<
\underline{k}_{2}\leq \overline{k}_{2}
< 
\underline{k}_{3}\leq \overline{k}_{3}
<
\ldots
<\infty
\end{equation}
\begin{definition}\label{def_M_S}
Let  
$\mathcal{M}^{+,S,\underline{k},\overline{k}}_{i_{1},\ldots,i_{m} }(M)
=
\{  
0<K\in C^{\infty}(M)
\;:\; K \text{ satisfies } \text{(i)-(iii)}
\}
$, 
where
\begin{enumerate}[label=(\roman*)]
\item $\{\nabla K=0\}\cap \{\Delta K=0\}=\emptyset$.
\item $K$ is a positive Morse function with Morse structure $\mathcal{S}$ and 
$$\sharp(\{\nabla K=0\}\setminus \{ m(K,\cdot)=0  \}  ) = m.$$
\item 
With some \textbf{fixed} indices 
$i_{1}<\ldots< i_{m}\in \N$
we may label 
$$
\{ \bar{x}_{i_{1}},\ldots,\bar{x}_{i_{m}} \}= \{\nabla K=0\}\setminus \{ m(K,\cdot)=0  \}
$$
such, that for $j=1,\ldots,m$
\begin{equation}\label{energy_stip_single_points}
\underline{k}_{i_{j}}\leq \frac{1}{K^{\frac{n-2}{n}}(\bar{x}_{i_{j}})}\leq \overline{k}_{i_{j}}.
\end{equation}
\end{enumerate}
We say, that elements of $\mathcal{M}^{+,S,\underline{k},\overline{k}}_{i_{1},\ldots,i_{m} }(M)$
are  \textbf{simply spreading} and we call a subset
\begin{equation*}
\mathcal{M}
\subseteq
\mathcal{M}^{+,S,\underline{k},\overline{k}}_{i_{1},\ldots,i_{m} }(M)
\end{equation*}
a \textbf{spread} in $\mathcal{M}^{+,S,\underline{k},\overline{k}}_{i_{1},\ldots,i_{m} }(M)$, if
\begin{enumerate}
\item[(iv)]	for all $A\subseteq \{ 1,\ldots,m \} $ there exists  $i\in \N$ such, that for all $K\in \mathcal{M}$
\begin{equation}\label{energy_strip_multi_bubbles}
\underline{k}_{i} \leq c_{n}(\sum_{j\in A}\frac{1}{K^{\frac{n-2}{2}}(\bar{x}_{i_{j}})})^{\frac{2}{n}}\leq \overline{k}_{i},
\end{equation} 
and we have \textbf{uniqueness} in the sense, that, if for $A_{1},A_{2} \subseteq \{ 1,\ldots,m \}$ and $K_{1},K_{2}\in \mathcal{M}$
\begin{equation}\label{uniqueness_condition}
\underline{k}_{i}
\leq 
c_{n}(\sum_{j\in A_{1}}\frac{1}{K^{\frac{n-2}{2}}(\bar{x}_{i_{j}})})^{\frac{2}{n}}
,
c_{n}(\sum_{j\in A_{2}}\frac{1}{K^{\frac{n-2}{2}}(\bar{x}_{i_{j}})})^{\frac{2}{n}}
\leq 
\overline{k}_{i},
\end{equation}
then $A_{1}=A_{2}$. 
\end{enumerate}
Finally we say, that an element
$K\in \mathcal{M}^{+,S,\underline{k},\overline{k}}_{i_{1},\ldots,i_{m} }(M)$ is \textbf{spreading}, 
if 
$\{ K \} \subset \mathcal{M}^{+,S,\underline{k},\overline{k}}_{i_{1},\ldots,i_{m} }(M)$
is a spread. 
\end{definition}	
\begin{remark}
To avoid confusion and recalling, that $i_{1} < \ldots < i_{m}$ are fixed, we emphasize, that by (iii)  
$$
\; \forall \; K\in \mathcal{M}^{+,S,\underline{k},\overline{k}}_{i_{1},\ldots,i_{m} }(M)
\; : \; 
\underline{k}_{i_{j}}\leq \frac{1}{K^{\frac{n-2}{n}}(\bar{x}_{i_{j}})}\leq \overline{k}_{i_{j}}.
$$
Also note, that, as the number of critical points is limited by the Morse structure, 
we may assume $$\sup_{i\in \N} \overline{k}_{i}<\infty.$$ 
Some comments are in order to clarify (i)-(iv) above.
\begin{enumerate}[label=(\roman*)]
\item is the usual non degeneracy assumption, which for Morse functions allows us to identify the critical points at infinity easily, cf. \cite{MM3}.
\item Given a Morse function $K$ on $M$, consider the space $L_{K}$ of Morse functions on $M$, whose critical points correspond one to one to those of $K$ with coinciding Morse indices. Then a Morse structure on $M$ is just any space $L_{K}$ induced by some Morse function $K$.	 
\item tells us, that irrespective of a particular choice of $K$ the critical points, which potentially induce a single bubbling critical point at infinity by having a negative Laplacian, are distinguished by being contained in different energy strips, cf. \eqref{energy_stip_single_points} and \eqref{critical_energies}.
\item is to say first, that for any  $K\in \mathcal{M}$ all potential critical points at infinity of $K$
are contained in some energy strips, which do not depend on the function $K$, but on the spread $\mathcal{M}$. 
And conversely any such energy strip determines for every $K\in \mathcal{M}$ the \textbf{unique}  
combination of critical points of $K$, which may produce a critical point at infinity within this energy strip, 
cf. \eqref{critical_energies}. 
\end{enumerate}
Spreading functions are dense in any $C^{k}(M,\R_{>0})$. Moreover a sufficiently small $C^{0}$-neighbourhood of a spreading function 
$K$ contains an abundance of spreading functions with varying signs of the Laplacian at non extremal critical points, 
cf. Appendix \ref{sec_sign_of_laplacian}. In particular any spreading function 
$K$ creates locally, i.e. close to it, some spread $\mathcal{M}$ with $K\in \mathcal{M}$, 
on which Theorem \ref{thm1} below is applicable.
\end{remark}

We then associate to each simply spreading function its critical points with negative Laplacian and 
thereby obtain from \eqref{energy_stip_single_points} a map 
\begin{equation}\label{the_map_N}
\begin{split}
\mathcal{N}
: \;&
\mathcal{M}^{+,S,\underline{k},\overline{k}}_{i_{1},\ldots,i_{m} }(M)
\longrightarrow 
\mathbb{P}(m)=\{ A \subseteq \{ 1,\ldots,m \}  \} \\
: \; &
K\longrightarrow \{ \bar{x}_{i_{j}}\; : \; j\in A\subset \{ 1,\ldots,m \}  \} = \{ \nabla K=0 \} \cap \{ \Delta K < 0 \}  
\longrightarrow 
A,
\end{split}
\end{equation}
which induces a partition of $\mathcal{M}^{+,S,\underline{k},\overline{k}}_{i_{1},\ldots,i_{m} }(M)$ by
identifying those functions, whose critical points with negative Laplacian correspond one to one by having values in the same energy strip. 
This justifies
\begin{definition}\label{def_partition}
Let
\begin{equation*} 
\tilde{\mathcal{M}}^{+,S,\underline{k},\overline{k}}_{i_{1},\ldots,i_{m} }(M)
=
\mathcal{M}^{+,S,\underline{k},\overline{k}}_{i_{1},\ldots,i_{m} }(M)/\sim \;\; \text{ via }\;\;
\; K_{1}\sim K_{2} \Longleftrightarrow
\mathcal{N}(K_{1})
=
\mathcal{N}(K_{2})
\end{equation*}  
and denote by 
$\tilde{\mathcal{M}}$ 
for a spread
$\mathcal{M}\subseteq \mathcal{M}^{+,S,\underline{k},\overline{k}}_{i_{1},\ldots,i_{m} }(M)$
the induced quotient.
\end{definition}
With these notions at hand we can state our main result, which shows, 
that  in a certain \textit{local} sense there exists always at most one combination of critical points with negative Laplacian such, 
that we cannot assure existence of solutions - even among situations with vanishing total degree, 
in particular when the index counting formula
\eqref{index_counting_formula}, of which \eqref{p_q_formula} and \eqref{index_countring_formula_n_3} are special cases, fails. 
\begin{thm}\label{thm1}
Let  
$\mathcal{M}
\subseteq
\mathcal{M}^{+,S,\underline{k},\overline{k}}_{i_{1},\ldots,i_{m} }(M)
$
be a spread. 
Then, if
\begin{equation*}
\frac{\overline{\kappa}}{\underline{\kappa}}
<
\inf_{i\in \N}\frac{\underline{k}_{i} }{\overline{k}_{i-1}}
\; \text{ and } \; 
\; \forall \; K_{1},K_{2}\in \mathcal{M}
\; : \; 
\frac{K_{1}}{\overline{\kappa}^{\frac{n}{n-2}}}
<
K_{2} 
<
\frac{K_{1}}{\underline{\kappa} ^{\frac{n}{n-2}}}
\end{equation*}
for some 
$0<\underline{\kappa}<1<\overline{\kappa}<\infty$,
there exists at most one class $\tilde{\mathcal{K}}\in \tilde{M}$ such, that
$$
\{ \partial J_{K}=0 \}
\cap 
\{ J_{K} \leq \frac{\overline{\kappa}}{\underline{\kappa}  } \sup_{n\in \N} \overline{k}_{n}\} 
=
\emptyset
$$
for at least one representative $K\in \tilde{\mathcal{K}}$ is possible. 
\end{thm}	   
Theorem \ref{thm1} is an easy consequence of Proposition \ref{prop_instead_of_thm}.
\begin{remark}
To contextualise Theorem \ref{thm1}  consider the case $M=S^{n}$ and 
\begin{enumerate}[label=(\roman*)]
\item 
let $K$ denote a positive, monotone height function, 
to which the Kazdan-Warner obstruction,
cf. \cite{Kazdan_Warner_Scalar}, applies.
Then for any choice of spread
$$ 
\mathcal{M}
\subseteq
\mathcal{M}^{+,S,\underline{k},\overline{k}}_{i_{1},\ldots,i_{m} }(M)
\; \text{ with } \; 
K \in \mathcal{M}
$$ 
evidently $\tilde{\mathcal{M}}$ is trivial, 
as is the Morse structure $\mathcal{S}$, 
i.e. consists only of the unique class, in which $K$ is contained,  
and this class has an element, namely $K$, for which 
$\partial J_{K}=0$ is not solvable.

\item  
let 
$
\mathcal{M}\subseteq \mathcal{M}^{+,S,\underline{k},\overline{k}}_{i_{1},\ldots,i_{m} }(M)
$
be a spread consisting of pinched functions, 
for all of which by Theorem 1 in \cite{MM4}
existence is guaranteed, provided more than one critical point has negative Laplacian. Then,
\begin{enumerate}
\item[1.)] 	if $\mathcal{S}$ provides at least two local maxima, then existence of a solution to
$ \partial J_{K}=0$ is always guaranteed. In this case there is no class with lack of solvability.
\item[2.)]  if $\mathcal{S}$ provides just one maximum, then either there is no class with lack of solvability or, 
if there is, it necessarily is the class of functions, whose only critical point with negative Laplacian is the global maximum.  
\end{enumerate}
\end{enumerate}
\end{remark}

\noindent
Clearly Theorem \ref{thm1}, while non constructive, nonetheless  
\begin{enumerate}[label=(\roman*)]
\item is a mean to turn an obstruction 
to existence for one specific function - and \cite{Bourguignon} provides interesting examples - 
into existence results for entire classes of comparable functions with the caveat, 
that to run our deformation scheme always requires a description of its critical points at infinity. 
\item turns the lack of low energy solutions for one specific function 
into an a priori energy bound for solutions of entire classes, 
whose existence then follows from Theorem \ref{thm1} or by contradiction. 
\item tells us, that depending on the complexity of a Morse structure  \textit{most} positive scalar curvature candidates realising this structure are conformally prescribable in a controllable way.
\item  hints at a deeper structure on the space of conformally prescribable  scalar curvatures.
\end{enumerate} 
  
We proceed as follows.  In Section \ref{Section_The_Variational_Formulation} we quickly review the variational setting 
of the critical equation and the relation to its subcritical approximation. 
In Section \ref{Section_Ordering_And_Deformation} we clarify the structure of spreads 
and present a deformation scheme,
which applied to the relevant variational functionals of the conformally prescribed scalar curvature problem 
will eventually lead to the proof of Theorem \ref{thm1}.
In Section \ref{Sec_The_Comparison} we then show, 
how to apply the deformation scheme leading to Theorem \ref{thm1}. 
In order to clarify the theoretical interest and limitations of Theorem \ref{thm1}, 
we describe in Section \ref{Sec_Obstruction_To_Non_Existence} the classical total degree or, as it is called,  
the index counting formula argument for  existence and discuss its failure. 
Here we also study the smooth heart on $S^{3}$ more closely and prove Theorem \ref{thm2}. 
Finally in the Appendix we include for the sake of completeness some topological lemmata, 
on which the deformation scheme is based, as well as a standard remark on Morse functions, 
which shows generic non triviality of Theorem \ref{thm1}.

\

Before passing to the actual content, we point out, that the subsequent arguments, 
which we provide for $n\geq 5$, 
do have at least for $n=2,3$ less stringent adaptations and are actually easier to obtain, as we shall comment on here and there.  
For the Gaussian case we refer to  \cite{MM8} for stronger results.

\section{The Variational Formulation}\label{Section_The_Variational_Formulation}
Let us review the variational setting, while we refer to \cite{MM1},\cite{MM2},\cite{MM3} for greater detail. 
Consider a closed Riemannian manifold
\begin{equation*}
M=(M^{n},g_{0}) 
\; \text{ with } \; n\geq 5,
\end{equation*}
of positive Yamabe  invariant, i.e. 
\begin{equation}\label{Yamabe_Invariant}
Y=Y(M,[g_{0}])= 
\inf_{g\in [g_{0}]}
\frac{\int_{M}R_{g}d\mu_{g}}{\mu_{g}^{\frac{n-2}{n}}(M)}>0,
\end{equation}
where $\mu_{g}$ denotes the measure and $R_{g}$ denotes the scalar curvature on $M$ induced by $g$. 
We then may suppose $R_{g_{0}}>0$ or even $R_{g_{0}}=1$. 
In this setting the conformal Laplacian
$L_{g_{0}}=-c_{n}\Delta_{g_{0}}+R_{g_{0}}$
is a positive and self-adjoint operator and
considering a conformal metric 
$$g=g_{u}=u^{\frac{4}{n-2}}g_{0}$$  
there holds  
\begin{equation*}
\begin{split} 
d\mu_{g_{u}}=u^{\frac{2n}{n-2}}d\mu_{g_{0}}
\;  \text{ and }\; 
R=R_{g_{u}}=u^{-\frac{n+2}{n-2}}(-c_{n} \Delta_{g_{0}} u+R_{g_{0}}u) =
u^{-\frac{n+2}{n-2}}L_{g_{0}}u
\end{split}
\end{equation*}
due to conformal covariance of the conformal Laplacian, i.e.
\begin{equation*}
-c_{n}\Delta_{g_{u}}v+R_{g_{u}}v=L_{g_{u}}v=u^{-\frac{n+2}{n-2}}L_{g_{0}}(uv).
\end{equation*}
So prescribing conformally  the scalar curvature 
$R=R_{g_{u}}=K$ 
is equivalent to solving 
\begin{equation}\label{the_equation}
L_{g_{0}}u=Ku^{\frac{n+2}{n-2}}.
\end{equation} 
Evidently
$
\Vert u \Vert^2 = \Vert u \Vert_{L_{g_0}}^2 = \int u \, L_{g_{0}}u \, d\mu_{g_{0}}
$ 
defines an equivalent norm on $W^{1,2}$. We then wish to study  
on
$$
\mathcal{A}
=
\{ u\in W^{1,2}(M)\; : \; 0\not \equiv u \geq 0 \} 
$$
the scaling invariant functional
\begin{equation*}
J_{K}:\mathcal{A}\longrightarrow \R:
u\longrightarrow 
\frac
{\int  L_{g_{0}}uu d\mu_{g_{0}}}
{(\int Ku^{\frac{2n}{n-2}}d\mu_{g_{0}})^{\frac{n-2}{n}}}
\; \text{ for } \;
K>0. 
\end{equation*}
Note, that $J_{K}$ is scaling invariant, which allows us the consider
$$
X=\{ u\in \mathcal{A}\; : \; \Vert u \Vert=1 \} 
$$
as a natural variational space.  Moreover the conformal scalar curvature 
$$
R=R_{g_{u}}
\; \text{ for } \; 
g=g_{u}=u^{\frac{4}{n-2}}g_{0}
$$ 
satisfies
\begin{equation*}
r=r_{g_{u}}=\int R d\mu_{g_{u}}=\int u L_{g_{0}} ud\mu_{g_{0}},
\end{equation*}  
whence
\begin{equation*}
J_{K}(u)=\frac{r}{k^{\frac{n-2}{n}}}
\;  \text{ with } \; k = k_{g_{u}}= \int K \, u^{\frac{2n}{n-2}} d \mu_{g_{0}}. 
\end{equation*}
We then find 
\begin{equation*}
\partial J_{K}(u)v
= 
\frac{2}{k^{\frac{n-2}{n}}}
\big[\int L_{g_{0}}uvd\mu_{g_{0}}-\frac{r}{k}\int Ku^{\frac{n+2}{n-2}}vd\mu_{g_{0}}\big],
\end{equation*} 
whereby \eqref{the_equation} has variational structure. Also  $J_{K}$ is of class 
$C^{2, \alpha}_{loc}(\mathcal{A})$ 
by direct calculation. 
Then, if
\begin{equation}\label{base_assumption}
n\geq 5 
\; \text{ and } \; 
0<K \; \text{ is Morse with } \;
\{ \nabla K=0 \} \cap \{ \Delta K=0 \} = \emptyset,   
\end{equation}
there is, cf. \cite{MM3}, a one to one correspondence
\begin{equation*}
\mathbb{P}(C_{-}(K)) \xrightarrow{\;\simeq\;} C_{\infty}(J_{K})
\end{equation*}
from the \textit{subsets} of 
$$C_{-}(K)=\{\nabla K=0\}\cap \{\Delta K<0\}$$
to the pure critical points at infinity of $J_{K}$, i.e. those of type
\begin{equation*}
\{x_{1},\ldots,x_{q}\} \longrightarrow
u_{\infty,x_{1},\ldots,x_{q}} 
=  
\alpha_{1}\delta_{a_{1}}+\ldots+\alpha_{q}\delta_{a_{q}}, 
\end{equation*} 
where $\delta_{a}$ denotes a Dirac measure in $L^{\frac{2n}{n-2}}$ at $a\in M$.
Finally, if $\{ \partial J_{K} =0 \} = \emptyset $, these are the \textit{only} critical points at 
infinity, cf. Lemma \ref{lem_subcritical_transition} for the subcritical analogon.

\subsection{Subcritical Transition}\label{sec_subcritical_transition}
Let us translate these notions into the realm of subcritical approximations. As exposed in 
\cite{MM2} there is under \eqref{base_assumption} an analogous one to one correspondence
\begin{equation*}
\mathbb{P}(C_{-}(K)) \xrightarrow{\; \simeq \;} C_{\infty}(J_{K,\tau})
\end{equation*}
from $\mathbb{P}(C_{-}(K))$ to the zero weak, finite energy limiting critical points
\begin{equation*}
C_{\infty}(J_{K,\tau})
=
\{
u_{\tau} \in \{\partial J_{K,\tau}=0\}
\;:\;
u_{\tau} \xrightharpoondown{\;weakly\;} 0 \; \wedge \; 
\lim_{\tau\to 0} J_{K,\tau}(u_{\tau})<\infty
\}
\end{equation*}
of the subcritical approximation functional
$$
J_{K,\tau}(u)
=
\frac{\int L_{g_{0}}uud\mu_{g_{0}}}{(\int Ku^{p+1}d\mu_{g_{0}})^{\frac{2}{p+1}}},
\; 
p=\frac{n+2}{n-2}-\tau,\; 0\leq \tau \longrightarrow 0.
$$
These critical points are of isolated simple bubbling type 
\begin{equation}\label{type_of_subcritical_solutions}
(\tau,\{x_{1},\ldots,x_{q}\})
\longrightarrow
u_{\tau,x_{1},\ldots,x_{q}}
=
\alpha_{1}\varphi_{a_{1},\lambda_{1}}
+
\ldots 
+
\alpha_{q}\varphi_{a_{q},\lambda_{q}}
+
v,
\end{equation}
where $\varphi_{a,\lambda}$ denotes a suitable bubble, and have critical limiting energy
\begin{equation}\label{energy_comparison_critical_sub_critical}
\begin{split}
J_{K,\tau}(u_{\tau,x_{1},\ldots,x_{q}})
=
J_{K}(u_{\infty,x_{1},\ldots,x_{q}})
+
o_{\tau}(1),
\end{split}
\end{equation}
see \eqref{pure_critical_point_at_infinity} and \eqref{critical_energies} for the value. 
We will always assume \eqref{base_assumption}, cf. (i) of Definition \ref{def_M_S}, 
and in fact rely in our arguments on this close relationship 
of pure critical points at infinity and zero weak, finite energy limiting subcritical solutions. 
In particular the following non existence statement is analogous 
to the \textit{"only"} statement on critical points at infinity in case $\{ \partial J_{K} = 0 \}$ above.
\begin{lemma}\label{lem_subcritical_transition}
Suppose 
$\{\partial J_{K}=0\}\cap \{J_{K}\leq L\}=\emptyset$.
Then 
\begin{equation*}
\;\exists\; \tau_{0}>0 
\;\forall \; 0<\tau<\tau_{0} 
\; : \; 
\{\partial J_{K,\tau}=0\}\cap \{J_{K,\tau}\leq L\}=
C_{\infty}(J_{K,\tau})\cap \{J_{K,\tau}\leq L\}.
\end{equation*}
\end{lemma}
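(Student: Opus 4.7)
The inclusion $\supseteq$ is immediate: by definition every $u\in C_{\infty}(J_{K,\tau})$ is a critical point of $J_{K,\tau}$, so restricting to the sublevel $\{J_{K,\tau}\leq L\}$ changes nothing. The content lies in $\subseteq$, which I would prove by contradiction. Assume there exist $\tau_{n}\searrow 0$ and
\[
u_{n}\in\{\partial J_{K,\tau_{n}}=0\}\cap\{J_{K,\tau_{n}}\leq L\}
\]
with $u_{n}\notin C_{\infty}(J_{K,\tau_{n}})$. Since $C_{\infty}(J_{K,\tau})$ is the set of \emph{finite-energy, zero-weak-limit} subcritical critical points and the energies $J_{K,\tau_{n}}(u_{n})$ are uniformly bounded by $L$, the only way to exit $C_{\infty}$ is to have, along a subsequence, a non-trivial weak limit $u_{n}\rightharpoonup u_{\infty}\neq 0$ in $W^{1,2}(M)$.

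The next step is to invoke the subcritical bubble decomposition developed in \cite{MM2},\cite{MM3}. After normalising $u_{n}$ so that it solves $L_{g_{0}}u_{n}=Ku_{n}^{p_{n}}$ with $p_{n}=\frac{n+2}{n-2}-\tau_{n}$, one obtains
\begin{equation*}
u_{n}=u_{\infty}+\sum_{i=1}^{q}\alpha_{i,n}\varphi_{a_{i,n},\lambda_{i,n}}+v_{n},\qquad \lambda_{i,n}\to\infty,\;\Vert v_{n}\Vert\to 0,
\end{equation*}
with almost orthogonality of all profiles in the $L_{g_{0}}$-inner product and a matching $L^{p_{n}+1}$-pairing against $K$. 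Because $K$ is smooth Morse and $p_{n}\to\frac{n+2}{n-2}$, passing to the weak limit in the Euler-Lagrange equation and using Rellich compactness for lower-order terms yields that $u_{\infty}$ solves the critical equation $L_{g_{0}}u_{\infty}=cKu_{\infty}^{\frac{n+2}{n-2}}$ for some $c>0$, so after a scaling it is a genuine critical point of $J_{K}$.

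The delicate step is the energy comparison. The almost-orthogonal splitting, combined with the strict positivity of each bubble energy and lower semicontinuity of the quadratic form $u\mapsto\int uL_{g_{0}}u\,d\mu_{g_{0}}$, produces
\begin{equation*}
J_{K}(u_{\infty})\leq \liminf_{n\to\infty}J_{K,\tau_{n}}(u_{n})\leq L,
\end{equation*}
so $u_{\infty}$ violates the standing hypothesis $\{\partial J_{K}=0\}\cap\{J_{K}\leq L\}=\emptyset$, the desired contradiction. Hence $u_{\infty}=0$, $u_{n}\rightharpoonup 0$, and by the bijection $\mathbb{P}(C_{-}(K))\xrightarrow{\,\simeq\,} C_{\infty}(J_{K,\tau})$ recalled above, the surviving pure bubble configuration of $u_{n}$ identifies it as an element of $C_{\infty}(J_{K,\tau_{n}})$, contradicting the original choice. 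The hardest point will be precisely the energy inequality above: the $\tau_{n}$-dependent exponent and the bubble contributions demand the sharp $\tau$-expansions of \cite{MM2} rather than a black-box concentration-compactness argument, in particular to ensure that the remainder $v_{n}$ and the shift $p_{n}\neq\frac{n+2}{n-2}$ do not introduce $o(1)$ deficits that could swallow the gap between $J_{K}(u_{\infty})$ and $L$.
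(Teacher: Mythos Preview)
Your overall architecture matches the paper's proof exactly: contradiction, bubble decomposition of the $u_{\tau_n}$ via \cite{MM1}, derive a nontrivial weak limit $u_\infty$ solving $\partial J_K=0$ with $J_K(u_\infty)\le L$, contradict the hypothesis, hence $u_\infty=0$, and then invoke the uniqueness part of the correspondence $\mathbb{P}(C_-(K))\simeq C_\infty(J_{K,\tau})$ from \cite{MM2}.

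The one place where the paper does something sharper than what you sketch is the energy inequality $J_K(u_\infty)\le\liminf J_{K,\tau_n}(u_n)$. You appeal to almost-orthogonality plus lower semicontinuity of the quadratic form, but $J_K$ is a \emph{ratio} $r/k^{(n-2)/n}$, so weak lower semicontinuity of the numerator alone does not yield the inequality; you would still need a matching splitting of $k_\tau$ and a super-/sub-additivity argument for the quotient, which is exactly the delicate $\tau$-dependent bookkeeping you flag at the end. The paper sidesteps all of this: it normalises to $k_{\tau_m}=1$, then tests $\partial J_{K,\tau_m}(u_{\tau_m})=0$ against the fixed function $u_\infty$ to obtain the exact relation $r_{u_\infty}/k_{u_\infty}=r_\infty/k_\infty$. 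Combined with $r_{u_\infty}\le r_\infty$ (positivity of $L_{g_0}$) and $k_\infty=1$ this gives $J_K(u_\infty)=r_{u_\infty}/k_{u_\infty}^{(n-2)/n}\le r_\infty=\lim J_{K,\tau_m}(u_{\tau_m})\le L$ in two lines, with no need to track the bubble contributions to the denominator at all. Your route can be made to work, but the paper's testing trick is both shorter and more robust against the $\tau$-shift in the exponent.
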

\begin{proof}
Arguing by contradiction we assume, that for some
$\tau_{m}\searrow 0$ 
there exist
\begin{equation*}
u_{\tau_{m}}
\in 
\{\partial J_{K,\tau_{m}}=0\}
\cap 
\{J_{K,\tau_{m}}\leq L\}
,\; \text{ but }\; 
u_{\tau_{m}}\not \in 
C_{\infty}(J_{K,\tau_{m}})\cap \{J_{K,\tau_{m}}\leq L\}.
\end{equation*}
Since $J_{K,\tau}$ is scaling invariant and $J_{K,\tau_{m}}(u_{\tau_{m}})\leq L$,  we may assume 
\begin{equation}\label{renormalisation}
k_{\tau_{m}}
=
\int Ku_{\tau_{m}}^{\frac{2n}{n-2}-\tau_{m}}=1
\; \text{ instead of }\; 
r_{u_{\tau_{m}}}=\int L_{g_{0}}u_{\tau_{m}}u_{\tau_{m}}=1.
\end{equation}
By virtue of Proposition 3.1 in \cite{MM1} we then find, 
that upon a subsequence
$$u_{\tau_{m}}
=
u_{\infty } 
+
\sum_{i=1}^{q}\alpha_{i_{m}}\varphi_{a_{i_{m}},\lambda_{i_{m}}}+v_{m}$$
with 
$u_{\infty}=0$ or $u_{\infty}> 0$ solving $\partial J_{K}(u_{\infty})=0$ 
and, as
$0<\tau_{m}\xrightarrow{\; m\to \infty\; } 0$, 
\begin{enumerate}[label=(\roman*)]
\item $\Vert v_{m} \Vert \longrightarrow  0$
\item $\lambda_{i_{m}}\longrightarrow \infty$
\item $M \supset (a_{i_{m}})\longrightarrow a_{i_{\infty}}$
\item 
$
c<\alpha_{i_{m}},\lambda_{i_{m}}^{\theta_{m}}<C
$
\item 
$
\frac
{
r_{\infty}K(a_{i_{\infty}})
}
{
4n(n-1)\lambda_{i_{m}}^{\theta}
}
\alpha_{i_{m}}^{\frac{4}{n-2}}
\longrightarrow 
1
$
\item $(\varepsilon_{i,j})_{m} \longrightarrow   0$
for all $i\neq j$,
\end{enumerate}
where
$0<c<C<\infty$,
 $\theta_{m}=\frac{n-2}{2}\tau_{m}$
and
\begin{equation*}
r_{\infty}=\lim_{m\to \infty}r_{u_{\tau_{m}}},\;
r_{u_{\tau_{m}}}
=
\int L_{g_{0}}u_{\tau_{m}}u_{\tau_{m}}
=
J_{K,\tau_{m}}(u_{\tau_{m}}). 
\end{equation*}
 In case $q=0$, i.e. 
 $u_{\tau_{m}}
 =
 u_{\infty } + v_{m}$, we then have 
 \begin{equation*}
 J_{K}(u_{\infty})=\lim_{m\to \infty}J_{K,\tau_{m}}(u_{\tau_{m}})\leq L
 \end{equation*}
contradicting 
$\{\partial J_{K}=0\}\cap \{J_{K}\leq L\}=\emptyset$.
In case $q>0$ and $u_{\infty}>0$ we have
\begin{equation}\label{r_u_infinity_estimate}
r_{u_{\infty}}
=
\int L_{g_{0}}u_{\infty}u_{\infty}
\leq 
\lim_{m\to \infty} \int L_{g_{0}}u_{\tau_{m}}u_{\tau_{m}}
=
r_{\infty}
\end{equation}
by positivity of $L_{g_{0}}$. Also,  as follows from testing 
$0=\partial J_{\tau_{m}}(u_{\tau_{m}})$ with $u_{\infty}$, 
\begin{equation}\label{r_infinity_k_infinity_relation}
\frac{r_{u_{\infty}}}{k_{u_{\infty}}}
=
\frac{r_{\infty}}{k_{\infty}},
\end{equation} 
while 
\begin{equation}\label{k_infinity_information}
 k_{\infty}
=\lim_{m\to \infty}\int Ku_{\tau_{m}}^{\frac{2n}{n-2}-\tau_{m}}=1
\end{equation}
due to \eqref{renormalisation}.
Then from 
\eqref{r_u_infinity_estimate},\eqref{r_infinity_k_infinity_relation} 
and 
\eqref{k_infinity_information}
we deduce
\begin{equation*}
J_{K}(u_{\infty})
=
\frac{r_{u_{\infty}}}{k_{u_{\infty}}^{\frac{n-2}{n}}}
\leq 
r_{\infty}^{\frac{2}{n}}(\frac{r_{\infty}}{k_{\infty}})^{\frac{n-2}{n}}=r_{\infty}
=
\lim_{m\to \infty}J_{K,\tau_{m}}(u_{\tau_{m}})\leq L,
\end{equation*}
contradicting
$\{\partial J_{K}=0\}\cap \{J_{K}\leq L\}=\emptyset$.
Hence $u_{\infty}=0$ and $q\geq 1$, i.e. 
\begin{equation*}
u_{\tau_{m}}\xrightharpoondown{\quad}0
\;\text{ weakly with }\; J_{K,\tau_{m}}(u_{\tau_{m}})\leq L
\end{equation*}
is of 
zero weak and finite energy limiting type. But then
\begin{equation*}
u_{\tau_{m}}\in 
C_{\infty}(J_{K,\tau_{m}})\cap \{J_{K,\tau_{m}}\leq L\}
\end{equation*}
by uniqueness of such solutions, cf. Theorem 1 in \cite{MM2}.
\end{proof}

Finally each $u_{\tau,x_{1},\ldots,x_{q}}\in C_{\infty}(J_{K,\tau})$ is non degenerate with Morse index
$$
m(J_{K,\tau}, u_{\tau, x_1, \dots, x_q}) 
=
(q-1) + \sum_{i=1}^q 
(n-m(K,x_i)) 
$$

and, as there are only finitely many, they induce a change of topology 
\begin{equation*}
J_{K,\tau}^{\;c+\varepsilon}
\simeq   
J_{K,\tau}^{\;c-\varepsilon}
\; \sharp \;
\sum_{i}\mathcal{D}_{i},\;
c=J_{K,\tau}(u_{\tau,x_{i_{1}},\ldots,x_{i_{q}}})
\end{equation*}

of the sublevel sets
$
J_{K,\tau}^{\;c\pm \varepsilon }
=
\{ J_{K,\tau} \leq c\pm \varepsilon  \} 
$
by attaching cells $\mathcal{D}_{i}$ with
$$\dim(\mathcal{D}_{i})=m(J_{K,\tau},u_{\tau,x_{i_{1}},\ldots,x_{i_{q}}}).$$ 

And likewise we have a homological equivalence 
$$
J_{K}^{\; c+\varepsilon}
\simeq 
J_{K}^{\; c-\varepsilon}
\sharp 
\sum_{i} \mathcal{D}_{i}
,\;
c=J_{K}(u_{\infty,x_{i_{1}},\ldots,x_{i_{q}}})
$$

with corresponding dimensions

\begin{equation}\label{index_J_K_Formula}
\begin{split}
\dim(\mathcal{D}_{i})
= \; & 
\ind(J_{K},u_{\infty,x_{i_{1}},\ldots,x_{i_{q}}}) \\
= \; &
m_{J_{K,\tau}}(u_{\tau,x_{i_{1}},\ldots,x_{i_{q}}})
=
(q-1) + \sum_{i=1}^q 
(n-m(K,x_i))
\end{split}
\end{equation}

for the pure critical points at infinity, cf. \cite{MM3}.

\section{Ordering and Deformation}\label{Section_Ordering_And_Deformation}

\noindent
Recall, that in case $n\geq 5$ and for a function 
$K\in \mathcal{M}^{+,S,\underline{k},\overline{k}}_{i_{1},\ldots,i_{m} }(M)$ 
the set
$$C_{-}(K)=\{ \nabla K=0 \}\cap \{ \Delta K < 0 \}$$
determines the pure critical points at infinity of $J_{K}$, cf. Section \ref{Section_The_Variational_Formulation}.
In fact for every subset
\begin{equation*}
\{x_{1},\ldots,x_{l}\}\subset C_{-}(K)
\end{equation*}
there exists exactly one critical point at infinity of type
\begin{equation}\label{pure_critical_point_at_infinity}
u_{\infty, x_{1},\ldots,x_{l}}
=
\alpha_{1}\delta_{x_{1}}+\ldots + \alpha_{l}\delta_{x_{l}}
\end{equation}
with energy, cf. \eqref{energy_strip_multi_bubbles},
\begin{equation}\label{critical_energies}
J_{K}(\alpha_{1}\delta_{x_{1}}+\ldots 
+ 
\alpha_{l}\delta_{x_{l}})
=
c_{n}(\sum_{i=1}^{l}\frac{1}{K(x_{i})^{\frac{n-2}{2}}})^{\frac{2}{n}}
,\; c_{n}>0,
\end{equation}
Likewise for $n=2,3$, but replacing subsets with elements due to single bubbling, and also the condition 
\eqref{energy_strip_multi_bubbles}
can be relaxed from subsets to elements. 
In any case equation \eqref{critical_energies} relates the variational functional $J_{K}$ 
to the notion of spreads $\mathcal{M}$ from Definition \ref{def_M_S}, which in return justifies the notion of the equivalence classes 
$\tilde{\mathcal{M}}$. 
The impact of (iv) in Definition \ref{def_M_S} is, that $\tilde{\mathcal{M}}$ becomes an ordered set.  

\subsection{The maximal critical energy}
In fact consider for $K\in \mathcal{M}, \mathcal{M}$ a spread, the maximal energy value 
\begin{equation*}
\mu(K)=\max_{C_{\infty}(J_{K})} J_{K}
\end{equation*}
over all pure critical points at infinity of $J_{K}$, i.e. those of type \eqref{pure_critical_point_at_infinity}. 
Since $n\geq 5$ and by \eqref{critical_energies}, 
the maximal value $\mu(K)$ arises as the energetic value of the unique  pure critical point at infinity 
constituted of all the critical points of $K$ with negative Laplacian, i.e. 
\begin{equation}\label{maximal_pure_value}
\mu(K)
=
\max_{C_{\infty}(J_{K})}J_{K}
=
c_{n}(\sum_{x\in C_{-}(K)}\frac{1}{K(x)^{\frac{n-2}{2}}})^{\frac{2}{n}}.
\end{equation}
We then obtain the desired ordering from  the maximal energy value
$\mu$.
\begin{lemma}\label{lem_energetic_identification}
Let $K_{1},K_{2}\in \mathcal{M},\mathcal{M}$ a spread.  Then
\begin{equation*}
K_{1}=K_{2} \; \text{ in } \;  \tilde{\mathcal{M}}
\Longleftrightarrow 
\; \exists \; i\in \N
\; : \; 
\underline{k}_{i}
\leq 
\mu(K_{1}),\mu(K_{2}) 
\leq 
\overline{k}_{i}
\end{equation*}
\end{lemma}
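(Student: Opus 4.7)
The plan is to prove the two implications separately, relying on condition (v) of Definition \ref{def_M_S} together with the identification \eqref{maximal_pure_value} of $\mu(K)$ as the multi-bubble energy associated to the full set $C_{-}(K)$ of critical points of $K$ with negative Laplacian.

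For the forward direction ($\Rightarrow$), I would assume $\mathcal{N}(K_{1})=\mathcal{N}(K_{2})=:N\subseteq \{1,\ldots,m\}$ and invoke (v) for this common subset. It delivers a single strip index $i\in \N$ such that
$$
\underline{k}_{i}
\leq
c_{n}\Bigl(\sum_{j\in N}\frac{1}{K^{(n-2)/2}(x_{j})}\Bigr)^{2/n}
\leq
\overline{k}_{i}
$$
for every $K\in \mathcal{M}$, hence in particular for $K=K_{1}$ and $K=K_{2}$. By \eqref{maximal_pure_value} the middle quantity is precisely $\mu(K_{1})$, respectively $\mu(K_{2})$, so both maximal values lie in $[\underline{k}_{i},\overline{k}_{i}]$, as claimed.

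For the backward direction ($\Leftarrow$), I would run the argument in reverse. Set $N_{\ell}:=\mathcal{N}(K_{\ell})$ for $\ell=1,2$ and apply (v) separately to $N_{1}$ and $N_{2}$ to obtain strip indices $i(N_{1}),i(N_{2})$ such that the respective multi-bubble sums lie in $[\underline{k}_{i(N_{\ell})},\overline{k}_{i(N_{\ell})}]$ for every $K\in \mathcal{M}$. Specializing to $K=K_{\ell}$ and using \eqref{maximal_pure_value} places $\mu(K_{\ell})\in [\underline{k}_{i(N_{\ell})},\overline{k}_{i(N_{\ell})}]$. Because the strict inequalities $\overline{k}_{j}<\underline{k}_{j+1}$ built into the sequences render all strips pairwise disjoint, the hypothesis $\mu(K_{1}),\mu(K_{2})\in [\underline{k}_{i},\overline{k}_{i}]$ immediately forces $i(N_{1})=i=i(N_{2})$.

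The substantive step closing the backward direction is then upgrading $i(N_{1})=i(N_{2})$ to $N_{1}=N_{2}$, and this is where I expect the main obstacle. It amounts to arguing that the subset-to-strip assignment $N\mapsto i(N)$ produced by (v) is injective on the subsets of $\{1,\ldots,m\}$ arising as $\mathcal{N}(K)$, which I read as the genuine content of the spread hypothesis: the single-bubble energies $K(x_{j})^{-(n-2)/n}$ are already separated into the pairwise disjoint strips $[\underline{k}_{i_{j}},\overline{k}_{i_{j}}]$ indexed by $i_{1}<\ldots<i_{m}$ through condition (iii), and the possible multi-bubble values $c_{n}(\sum_{j\in N} K(x_{j})^{-(n-2)/2})^{2/n}$ form a finite collection that varies only within uniformly separated ranges as $K$ runs through $\mathcal{M}$; condition (v) can then be read as the requirement that the sequences $(\underline{k}_{l}),(\overline{k}_{l})$ are fine enough to sort these distinct values into distinct strips. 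Once injectivity is in hand, $N_{1}=N_{2}$ and consequently $K_{1}=K_{2}$ in $\tilde{\mathcal{M}}$, completing the equivalence.
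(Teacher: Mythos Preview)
Your proposal is correct and follows essentially the same route as the paper: both directions hinge on condition (v) of Definition \ref{def_M_S} together with \eqref{maximal_pure_value}, and the backward direction reduces to injectivity of the assignment $N\mapsto i(N)$. Your hedging about that injectivity step is unnecessary, though: it is not something to be \emph{derived} from (iii)--(iv) but is precisely the intended content of (v), as the remark following Definition \ref{def_M_S} makes explicit (``the energy strip \ldots\ uniquely determines the critical points of $K$ involved''); the paper accordingly just asserts ``\eqref{energy_strip_multi_bubbles} implies $N_{1}=N_{2}$'' without further argument.
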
	 
\begin{proof}
To show the lemma, we suppose
\begin{enumerate}[label=(\roman*)]
\item on one hand, that
$K_{1}=K_{2} \; \text{ in } \;  \tilde{\mathcal{M}}$, i.e.  
$\mathcal{N}(K_{1})= \mathcal{N}(K_{2})=A$ by Definition \ref{def_partition} and \eqref{the_map_N}.
Then 
$$
\mu(K_{2})=c_{n}(\sum_{k\in A}\frac{1}{K_{1}(\bar{x}_{i_{k}})^{\frac{n-2}{2}}})^{\frac{2}{n}}
\; \text{ and } \; 
\mu(K_{2})=c_{n}(\sum_{k\in A}\frac{1}{K_{2}(\bar{x}_{i_{k}})^{\frac{n-2}{2}}})^{\frac{2}{n}} 
$$
and the implication follow from \eqref{energy_strip_multi_bubbles}. 
\item on the other hand, that 
$
\underline{k}_{i}
\leq 
\mu(K_{1}),\mu(K_{2}) 
\leq 
\overline{k}_{i}$ 
for some $i\in \N$, 
which  for the subsets 
$$C_{-}(K_{1})\sim A_{1} \subseteq \{ 1,\ldots,m \} 
\; \text{ and } \; 
C_{-}(K_{2})\sim A_{2} \subseteq \{ 1,\ldots,m \} $$ 
translates by \eqref{energy_strip_multi_bubbles} into
$$
\underline{k}_{i} 
\leq 
c_{n}(\sum_{k\in A_{1}}\frac{1}{K_{1}(\bar{x}_{i_{k}})^{\frac{n-2}{2}}})^{\frac{2}{n}}
, 
c_{n}(\sum_{k \in A_{2}}\frac{1}{K_{1}(\bar{x}_{i_{k}})^{\frac{n-2}{2}}})^{\frac{2}{n}}
\leq 
\overline{k}_{i}. 
$$
Then the uniqueness condition \eqref{uniqueness_condition} implies $A_{1}=A_{2}$ and the conclusion follows.
\end{enumerate}
This proves the desired equivalence statement.  
\end{proof}

In particular the partition 
$\sim$ on $\mathcal{M}$ leading to $\tilde{\mathcal{M}}$ 
defines an injective mapping  
$$
i:\tilde{\mathcal{M}}\longrightarrow \N: \tilde K \longrightarrow i(\tilde K)
$$ 

by
$$
i(\tilde K)=l
\Longleftrightarrow 
\; \forall \; K\in \tilde K
\; : \; 
\underline{k}_{l}
\leq
\mu(K)
\leq
\overline{k}_{l}.
$$
As a consequence  we may order the equivalence classes $\tilde{\mathcal{K}}\in \tilde{\mathcal{M}}$ via 
\begin{equation*}
\tilde{\mathcal{K}_{1}}\leq \tilde{\mathcal{K}_{2}}
\Longleftrightarrow
i(\tilde{\mathcal{K}}_{1})\leq i(\tilde{\mathcal{K}}_{2}) 
,
\end{equation*}
i.e.  we induce an ordering on $\tilde{\mathcal{M}}$ according to the energy strip, 
in which we find the maximal pure critical value at infinity of representatives.
Hence and conversely we have  
from the finite set
\begin{equation*}
N 
=
\{ 1\leq j\in \N \; : \; \; i(\tilde K)=j \; \text{ for some } \; \tilde{K}\in \tilde{M} \} 
\subset
\N_{\geq 1}
\end{equation*}
a bijection $I:N\longrightarrow \tilde{\mathcal{M}}$
onto the set of equivalence classes $\tilde{\mathcal{M}}$, which we index as
$$\{ \tilde{K}_{j} \}_{j\in N}=\tilde{M}$$ 
accordingly. With this notions at hand we make
\begin{definition}\label{def_sigma}
Given $L>0$ let
\begin{equation*}
\sigma
=
\min 
\{n\in \N
\;:\;
\forall \; n<j\in N
 \; \wedge \; 
 K\in \tilde{\mathcal{K}}_{j}
\;:\;
\{\partial J_{K}=0\} \cap \{ J_{K}\leq L \} \neq \emptyset 
\}
\geq 0
.
\end{equation*}
\end{definition}

\

\noindent
Hence by definition and recalling $N\subset \N_{\geq 1}$
\begin{enumerate}[label=(\roman*)]
 \item 
if $\sigma=0$, then $\; \forall \; K \in \mathcal{M}\; : \; \{\partial J_{K}=0\} \cap \{ J_{K}\leq L \} \neq \emptyset$, 
i.e. we can always solve
 \item 
if $\sigma >0$, then $\sigma \in N$ and
$$
\forall\; \sigma <\overline{\sigma} \in N  \; \wedge \;  K_{\overline{\sigma}}\in \tilde{\mathcal{K}}_{\overline{\sigma}}
\;:\; 
\{\partial J_{K_{\overline{\sigma}}}=0\} \cap \{ J_{K_{\overline{\sigma}}}  \leq  L\} \neq \emptyset
$$
 \item  
 if $\sigma > 0$, then
$
 \exists\;  K_{\sigma}\in \tilde{\mathcal{K}}_{\sigma}\;:\;\{ \partial J_{K_{\sigma}}=0\} \cap \{ J_{K_{\sigma}}\leq L \} =\emptyset$
 
 \item 
 if
$\sigma = \min N$, then $\tilde{K}_{\sigma}$ is the unique class, for which 
$$
\; \exists \; K_{\sigma} \in \tilde{\mathcal{K}}_{\sigma}\; : \; 
\{ \partial J_{K_{\sigma}}=0 \} \cap \{ J_{K_{\sigma}}\leq L \}=\emptyset. 
$$
\end{enumerate}
In other words $\tilde{\mathcal{K}}_{\sigma}$, if existent and that means $\sigma >0$, is in terms of the order on $\tilde{\mathcal{M}}$
the \textit{highest} class of functions in $\mathcal{M}$, for which we cannot solve for \textit{at least one} representative $K_{\sigma}\in \tilde{\mathcal{K}}_{\sigma}$.

\begin{remark}
In case $n=2,3$ due to single bubbling we may likewise partition along 
$$
\mu(K)=\max_{C_{\infty}(J_{K})} J_{K}.
$$   
By (iii) or (iv) in Definition \ref{def_M_S} this means, that we
identify those functions $K\in \mathcal{M}$, whose minimal values
$
\min_{x\in C_{-}(K)} K
$ 
over their critical points with negative Laplacian lie in the same energy strip, 
and thus identify much more functions $K$ with respect to the case $n\geq 5$, where 
all critical points with negative Laplacian must lie in the same energy strips. 
However, cf. Proposition \ref{prop_comparison_argument}, 
the deformation scheme is applied to the highest energy critical point at infinity, 
which in case $n=2,3$ corresponds to the critical point of $K$ with negative Laplacian realizing $\min_{x\in C_{-}(K)} K$.
\end{remark}
\subsection{A Deformation Scheme}
We present a deformation scheme, leading by means of a comparison argument to the existence of critical points of a functional. 
The scheme is non constructive, as it follows from arguing by contradiction. 
The contradiction would be, that in absence of a critical point, whose existence we wish to prove, 
it was possible to circumvent by energetic deformation an energetically isolated, non degenerate critical point of a different functional, 
which topologically is impossible. We visualise the setting in Figure \ref{fig_deformation}. 
\begin{figure}[H]  
\centering
\includegraphics[width=0.8\textwidth]{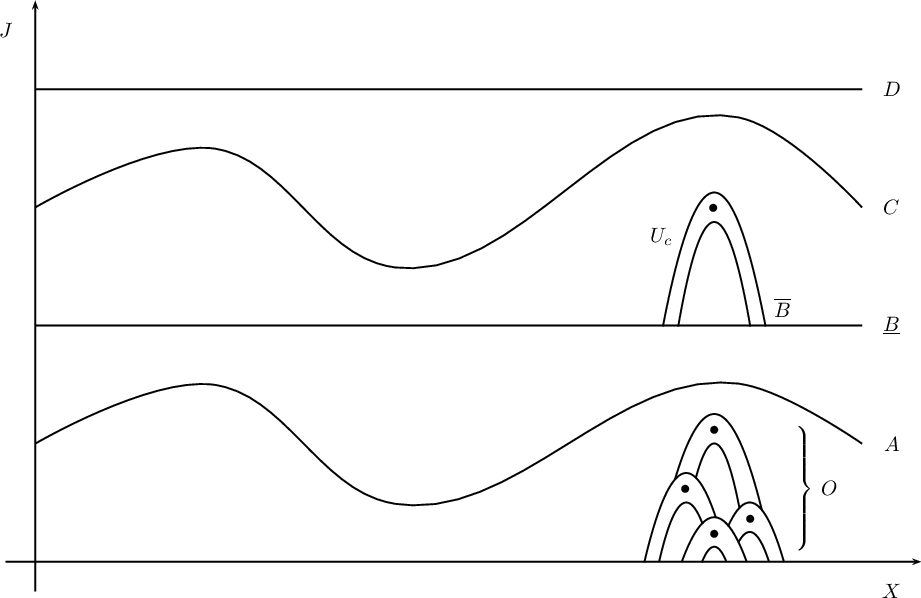} 
\caption{The deformation scheme to circumvent a critical point} 
\label{fig_deformation}
\end{figure}
\begin{proposition}[The Deformation Scheme]\label{prop_deformation_scheme}
$_{}$
\begin{enumerate}[label=(\roman*)]
\item Consider  energies $I,J$ satisfying the Palais-Smale condition and with intuitive notation sublevels 
$$
D\simeq \{ J\leq D \} , \underline B\simeq \{ J\leq \underline B \} 
\; \text{ and } \; 
C\simeq \{ I\leq C \} ,A\simeq \{ I\leq A \}
$$ 
of $J$ and $I$ respectively.
\item $J$ has exactly one non degenerate critical point 
$$c \in \{ \partial J=0 \}\cap \{ \underline B\leq J \leq D \} $$
and we let
$$\overline B=U_{c}\cup \underline B$$ 
for a neighbourhood $U_{c}$ of the unstable manifold
$W_{u}(c)$.
\item
All critical points of $J$ in $D$ are non degenerate and there holds 
$$
O
=
\cup_{x\in \{ \partial J=0 \}\setminus \{ c \}  \cap D  }
U_{x}
\subseteq A
$$
for the union $O$ of some neighbourhoods $U_{x}$ of $W_{u}(x)$.
\item There holds
$
A \subseteq \underline B \subseteq \overline B \subseteq C \subseteq D
$.
\end{enumerate}
Then necessarily 
\begin{equation}\label{critical_regime_for_I}
\{ \partial I=0 \} \cap \{ A\leq I \leq C \} \neq \emptyset.
\end{equation}
\end{proposition}	

\begin{proof}
Arguing by contradiction we may retract by weak deformation
$$
D \xhookrightarrow{\quad\;} \overline B
\; \text{ and } \; 
C \xhookrightarrow{\quad\;} A
$$
and, since $A\subseteq \overline B \subseteq C\subseteq D$, we find the homotopy equivalence relations
\begin{equation}\label{first_homotopy_equivalence}
A\simeq \overline B \simeq C \simeq D,
\end{equation} 
cf. Corollary \ref{cor_homotopy_equivalence}. On the other hand we have weak deformation retracts
$$
C \xhookrightarrow{\quad\;} A
\; \text{ and } \; 
\underline B \xhookrightarrow{\quad\;} O
$$
and, since $O\subseteq A \subseteq \underline B \subseteq C$, we find again by Corollary \ref{cor_homotopy_equivalence} the homotopy equivalence relations
\begin{equation}\label{second_homotopy_equivalence}
O\simeq A \simeq \underline B \simeq C.
\end{equation}
Then from \eqref{first_homotopy_equivalence} and \eqref{second_homotopy_equivalence} we deduce the homotopy equivalence
$\underline B\simeq \overline B$, while 
$\overline B \simeq \underline B\;\sharp\; \mathcal{D}^{\,m(J,x_{\infty})}$, 
i.e. $\overline B $ is homotopy equivalent to $\underline B$ with a cell
$\mathcal{D}$ of dimension 
$\dim(\mathcal{D})=m(J,x_{\infty})$ 
attached, which is an obvious contradiction and \eqref{critical_regime_for_I} follows.
\end{proof}

\noindent 
Applying Proposition \ref{prop_deformation_scheme} for $\underline{\sigma}<\sigma$ and suitable $\mathcal{M}$
to the  functionals
$$
I=J_{K_{\underline{\sigma}}}
\; \text{ and } \; 
J=J_{K_{\sigma}} 
\; \text{ for representatives} \;
K_{\underline{\sigma} }\in \mathcal{K}_{\underline{\sigma} }, K_{\sigma}\in \tilde{\mathcal{K}}_{\sigma}, 
$$
cf.  Definitions ,\ref{def_M_S},\ref{def_partition} and \ref{def_sigma},
then will lead to Theorem \ref{thm1}

\section{The Comparison}\label{Sec_The_Comparison}
Recall, that by construction, cf. \eqref{maximal_pure_value}, Lemma \ref{lem_energetic_identification} with the remarks following, 
the maximal critical values
$$
\mu(K)
=
c_{n}(\sum_{x\in C_{-}(K)}\frac{1}{K(x)^{\frac{n-2}{2}}})^{\frac{2}{n}}
$$
for each
$K\in \tilde{\mathcal{K}}_{i}$
are comparable by 
$$
\underline{k}_{i}
\leq
\mu(K)
\leq
\overline{k}_{i}
$$
and arise from corresponding critical points at infinity 
\begin{equation*}
u_{\infty,i} 
=
\sum_{x_{j} \in C_{-}(K)}\alpha_{j}\delta_{x_{j}}.
\end{equation*}
Moreover, as follows as Lemma \ref{lem_subcritical_transition}, if
$$
\{\partial J_{K}=0\} \cap \{ J_{K}\leq L \} =\emptyset
\; \text{ for some } \; 
L\geq \mu(K)
$$ 
then the only critical points at infinity within $\{ J\leq L \} $
are those of pure type and hence the maximal critical and the maximal pure critical value on the latter set coincide. 

\ 

Recalling Definition \ref{def_sigma} and the following comments, 
we then suppose
\begin{equation*}
\exists\;K_{\sigma}\in \tilde{\mathcal{K}}_{\sigma}\;:\;
\{\partial J_{K_{\sigma}}=0\}\cap \{ J_{K_{\sigma}}\leq L \} =\emptyset
\end{equation*}

and wish for some $\underline{\sigma}<\sigma$ to lead
\begin{equation*}
\exists\;K_{\underline \sigma}\in \tilde{K}_{\underline\sigma}\;:\;
\{\partial J_{K_{\underline \sigma}}=0\}\cap \{ J_{K_{\underline{\sigma} }}\leq L \} =\emptyset
\end{equation*}

for a suitable choice of $L\geq 0$ to a contradiction. 

\
 
\noindent
Since by construction, cf. \eqref{ordering_the_k_i},
$$
\ldots <\underline{k}_{i-1}\leq \overline{k}_{i-1}<\underline{k}_{i}<\overline{k}_{i} < \underline{k}_{i+1}\leq \overline{k}_{i+1} < \ldots,
$$
we are then in the situation, that all critical values
of $J_{K_{\underline \sigma}}$, i.e.
\begin{equation*}
J_{K_{\underline \sigma}}(C_{\infty}(J_{K_{\underline \sigma}}))
=
\{ \;
c_{n}(\sum_{x\in A}\frac{1}{K_{\underline \sigma}(x)^{\frac{n-2}{2}}})^{\frac{2}{n}}
\;:\;
A\subseteq C_{-}(K_{\underline \sigma})
\; \}
\end{equation*} 
and all critical values of $J_{K_{\sigma}}$ apart from $\mu(K_{\sigma})$, i.e.
\begin{equation*}
J_{K_{\sigma}}(C_{\infty}(J_{K_{\sigma}})\setminus \{u_{\infty,\sigma}\})
=
\{ \;
c_{n}(\sum_{x\in A}\frac{1}{K_{\sigma}(x)^{\frac{n-2}{2}}})^{\frac{2}{n}}
\;:\;
A\subsetneq C_{-}(K_{\underline \sigma})
\; \},
\end{equation*}
are below $\overline{k}_{\sigma-1}$, while $\mu(K_{\sigma})\geq \underline{k}_{\sigma}>\overline{k}_{\sigma-1}$. 
Finally let us denote by
\begin{equation*}
u_{\infty,\sigma}  
=
\sum_{x_{j} \in C_{-}(K_{\sigma})}\alpha_{j}\delta_{x_{j}}
\end{equation*}
the unique critical point at infinity of $J_{K_{\sigma}}$ with maximal energy 
$$\mu(K_{\sigma})=J(u_{\infty,\sigma})$$
and let us accordingly denote by 
\begin{equation}\label{u_tau_sigma}
u_{\tau,\sigma}
\in C_{\infty}(J_{K_{\sigma},\tau}),
\end{equation}
cf. Section \ref{sec_subcritical_transition}, 
the unique critical point of $J_{K_{\sigma},\tau}$ associated to 
$u_{\infty,\sigma}$.

\

We now provide a criterion for the existence of a critical point of $J_{K_{\underline{\sigma}}}$ 
within a certain energy strip.

\begin{proposition}\label{prop_comparison_argument}
Let $\mathcal{M}\subseteq \mathcal{M}^{+,S,\underline{k},\overline{k}}_{i_{1},\ldots,i_{m} }(M)$
be a spread. For
$ \underline{\sigma}<\sigma$
consider
$$
K_{\underline{\sigma} }\in \tilde{\mathcal{K}}_{\underline{\sigma} }
\; \text{ and } \;  
K_{\sigma}\in \tilde{\mathcal{K}}_{\sigma}
,
\; \text{ where } \; 
\tilde{\mathcal{K}}_{\underline{\sigma}}
,
\tilde{\mathcal{K}}_{\sigma}
\in
\tilde{\mathcal{M}},
$$ 
and suppose, that 
\begin{equation*}
\{ \partial J_{K_{\sigma }} = 0 \}
\cap
\{ J_{K_{\sigma}}\leq \frac{\kappa_{\sigma}}{\kappa_{\sigma-1}}\overline{k}_{\sigma} \}  
=
\emptyset
\end{equation*}
as well as 
\begin{equation}\label{energy_strip_restrictions}
\frac{K_{ \sigma}}{\kappa_{\sigma}^{\frac{n}{n-2}}}
<
K_{\underline{\sigma}}
<
\frac{K_{ \sigma}}{\kappa_{\sigma-1}^{\frac{n}{n-2}}}
\quad \text{ and } \quad 
\kappa_{\sigma} \overline{k}_{\sigma-1}<\kappa_{\sigma-1}\underline{k}_{\sigma}
\end{equation}
for some 
$0<\kappa_{\sigma-1}<1<\kappa_{\sigma}<\infty$.
Then 
$\{\partial J_{K_{\underline \sigma}}=0\}
\cap
\{
J_{K_{\underline{\sigma}}}
\leq 
\kappa_{\sigma}\overline{k}_{\sigma}
\}
\neq 
\emptyset
. 
$
\end{proposition}
\begin{proof}
Arguing by contradiction and using  Lemma \ref{lem_subcritical_transition} we may assume, that
\begin{enumerate}[label=(\roman*)]
\item[1.)] \quad 
$
\{\partial J_{K_{\sigma},\tau}=0\}
\cap 
\{J_{K_{\sigma},\tau}\leq \frac{\kappa_{\sigma}}{\kappa_{\sigma-1}}\overline{k}_{\sigma}\}
=
C_{\infty}(J_{K_{\sigma},\tau})
$
\item[2.)] \quad
$
\{\partial J_{K_{\underline\sigma},\tau}=0\}
\cap 
\{J_{K_{\underline\sigma},\tau}\leq \kappa_{\sigma}\overline{k}_{\sigma}\}
=
C_{\infty}(J_{K_{\underline\sigma},\tau})
$
\end{enumerate}
for all $0<\tau \ll 1$ sufficiently, since 
$$
\frac{\kappa_{\sigma}}{\kappa_{\sigma-1}}\overline{k}_{\sigma} > \overline{k}_{\sigma}
\; \text{ and } \; 
\kappa_{\sigma}\overline{k}_{\sigma}>\overline{k}_{\sigma}>\overline{k}_{\sigma-1}\geq \overline{k}_{\underline{\sigma} -1}.$$
Having thus passed to the subcritical scenario, by
\eqref{energy_comparison_critical_sub_critical} and \eqref{energy_strip_restrictions}
there exist 
$$
\kappa_{\tau,\sigma-1},\kappa_{\tau,\sigma}
=
\kappa_{\sigma-1},\kappa_{\sigma}
+
o_{\tau}(1)
\; \text{ and } \;
\overline{k}_{\tau,\sigma-1},\underline{k}_{\tau,\sigma}   
=
\overline{k}_{\sigma-1},\underline{k}_{\sigma}   
+
o_{\tau}(1)
$$
such, that analogously to the critical situation there holds
\begin{equation}\label{tau_relations_for_k_and_kappa}
\begin{split}
\kappa_{\tau,\sigma-1}J_{K_{\sigma},\tau}
\leq 
J_{K_{\underline \sigma},\tau}
\leq 
\kappa_{\tau,\sigma}J_{K_{\sigma},\tau}
\quad \text{ and } \quad
\kappa_{\tau,\sigma}\overline{k}_{\tau,\sigma-1}
<
\kappa_{\tau,\sigma-1}\underline{k} _{\tau,\sigma}.
\end{split}
\end{equation}
Now, for $0<\tau \ll 1$ fixed, let us translate these relations to
\begin{equation*}
\begin{split}
J=J_{K_{\sigma},\tau} 
\quad \text{ and } \quad 
I=J_{K_{\underline\sigma},\tau},
\end{split}
\end{equation*}  
in order to apply Proposition \ref{prop_deformation_scheme}.
Clearly $J$ and $I$ satisfy by virtue of \eqref{tau_relations_for_k_and_kappa}
\begin{equation}\label{I_J_comparable}
\begin{split}
\kappa_{1}J\leq I \leq \kappa_{2}J
\; \text{ for } \; 
\kappa_{1}=\kappa_{\tau,\sigma-1},
\kappa_{2}=\kappa_{\tau,\sigma}.
\end{split}
\end{equation}
Moreover,  recalling \eqref{u_tau_sigma} and 1.) above,  
let us also choose a suitable 
\begin{equation}\label{k_3}
k_{3}=\overline{k}_{\tau,\sigma}=\overline{k}_{\sigma}+o_{\tau}(1),
\end{equation}
and furthermore denote by
\begin{equation*} 
\begin{split}
\{x_{1},\ldots,x_{q}\}=\{\partial J=0\}\cap \{ J\leq \frac{\kappa_{2}}{\kappa_{1}} k_{3} \} , 
\; x_{q}=c=u_{\tau,\sigma}
\end{split}
\end{equation*}
the critical points of $J$ in $\{ J\leq \frac{\kappa_{2}}{\kappa_{1}} k_{3} \}$
corresponding to $C_{\infty}(J_{K_{\sigma},\tau}),$
for which 
\begin{equation}\label{critical_energies_J}
\begin{split}
J(x_{1}) \leq  \ldots \leq J(x_{q-1})
<
k_{1}
<
k_{2}
<
J(x_{q})
\; \text{ for } \; 
k_{1}=\overline{k}_{\tau,\sigma-1},k_{2}=\underline{k}_{\tau,\sigma}.
\end{split}
\end{equation} 
Also by \eqref{tau_relations_for_k_and_kappa} there holds
\begin{equation}\label{kappa_k_relation_in_the_proof}
\kappa_{2}k_{1}
<
\kappa_{1}k_{2}. 
\end{equation}
Let us now show for 
\begin{enumerate}
\item[($\alpha$)] $\{J\leq \frac{\kappa_{2}}{\kappa_{1}}k_{3}\}=D=\{ J\leq D \} $ 
\item[($\beta$)] $\{ I\leq \kappa_{2}k_{3} \}=C=\{ I\leq C \}  $
\item[($\gamma$)] 	$\{ J\leq k_{2} \}=\underline{B}=\{ J \leq \underline{B}  \}   $
\item[($\delta$)]  $\overline{B}=U_{c}\cup \underline{B},\; c=x_{q}=u_{\tau,\sigma}$
\item[($\varepsilon $)] $\{ I\leq \kappa_{1}k_{2}\ \}=A=\{ I\leq A \}  $
\end{enumerate}
applicability of Proposition \ref{prop_deformation_scheme}. 

\

\noindent 
We first observe, that $J$ and $I$ as subcritical functionals satisfy the Palais-Smale condition, while the critical points of $J$ in $D$ are all non degenerate, cf. Section \ref{sec_subcritical_transition}, and 
$$c=x_{q}=u_{\tau,\sigma} \in \{ \underline{B}\leq J \leq D  \}$$
is the unique critical point of $J$ therein, since without loss of generality
$$
\underline{B}=k_{2}=\underline{k}_{\tau,\sigma} 
<
J(c)=J_{K_{\sigma}}(u_{\tau,\sigma})
< 
\overline{k}_{\tau,\sigma} = k_{3}\leq \frac{\kappa_{2}}{\kappa_{1}}k_{3}=D.
$$
We are thus left with verifying
$
O\subseteq A \subseteq \underline B \subseteq \overline B \subseteq C \subseteq D.
$
Indeed
\begin{enumerate}[label=(\roman*)]
\item $D\supseteq C$ follows by \eqref{I_J_comparable}.
\item $C\supseteq \overline{B}$ follows from $J(c)<k_{3}$ and
$I\leq \kappa_{2}k_{3}$
for
$J\leq k_{3}$
by \eqref{I_J_comparable}.
\item $\overline{B}\supseteq \underline{B}$ is trivial.
\item $\underline{B}\supseteq A $
follows from  \eqref{I_J_comparable}
\item
$A\supseteq O$ follows, since by \eqref{critical_energies_J} we may assume $\sup_{O}J \leq k_{1}$, and due to 
\eqref{I_J_comparable} and  \eqref{kappa_k_relation_in_the_proof}.

$$
J\leq k_{1} \Longrightarrow I \leq \kappa_{2}k_{1}
<
\kappa_{1}k_{2}
$$
\end{enumerate}
Therefore Proposition \ref{prop_deformation_scheme} is applicable and we conclude
\begin{equation*}
\{ \partial I=0 \} \cap \{ A\leq I \leq C \} \neq \emptyset,
\end{equation*}
i.e. that $\partial J_{K_{\underline{\sigma}},\tau}=0$ permits a solution 
$u\in \{ \partial J_{K_{\underline{\sigma} },\tau}=0 \} $ with energy

\begin{equation}\label{Energy_Bound_On_The_Constructed_Solution}
\kappa_{\tau,\sigma-1}\underline{k}_{\tau,\sigma} 
=
\kappa_{1}k_{2}
\leq 
J_{K_{\underline{\sigma}},\tau}(u)
\leq
\kappa_{2}k_{3}
=
\kappa_{\tau,\sigma}\overline{k}_{\tau,\sigma},
\end{equation}
cf. ($\beta$),($\varepsilon$),\eqref{I_J_comparable},\eqref{k_3}
and \eqref{critical_energies_J}  above.

\

\noindent
On the other hand, cf. 2.) above, the only critical points of  $J_{K_{\underline{\sigma}},\tau}$ in 
$
\{J_{K_{\underline{\sigma}},\tau}
\leq
\kappa_{\sigma}\overline{k}_{\sigma}
\}
$
arise from $C_{\infty}(J_{K_{\underline \sigma},\tau})$ and by construction 
\begin{equation}\label{energy_bound_from_lower_solutions}
\sup_{C_{\infty}(J_{K_{\underline \sigma},\tau})} J_{K_{\underline{\sigma}},\tau}
\leq 
\overline{k}_{\tau,\underline{\sigma} }
\leq 
\overline{k}_{\tau,\sigma-1}.
\end{equation}  
But from \eqref{energy_strip_restrictions} or \eqref{tau_relations_for_k_and_kappa}
$$
\overline{k}_{\tau,\sigma-1}
<
\kappa_{\tau,\sigma}^{-1}\kappa_{\tau,\sigma-1}\underline{k} _{\tau,\sigma}
<
\kappa_{\tau,\sigma-1}\underline{k} _{\tau,\sigma},
$$
since 
$\kappa_{\tau,\sigma}=\kappa_{\sigma}+o_{\tau}(1)$
and
$\kappa_{\sigma}>1$ 
by assumption. Hence from \eqref{Energy_Bound_On_The_Constructed_Solution} and \eqref{energy_bound_from_lower_solutions} we find
$$
u\in \{ \partial J_{K_{\underline{\sigma} },\tau}=0 \} \setminus C_{\infty}(J_{K_{\underline \sigma},\tau}),
$$
and the claim follows from passing to the limit $\tau \longrightarrow  0$.
\end{proof}

\begin{proposition}\label{prop_instead_of_thm}
Let  
$\mathcal{M}
\subseteq
\mathcal{M}^{+,S,\underline{k},\overline{k}}_{i_{1},\ldots,i_{m} }(M)
$
be a spread. 
Then, if
\begin{equation*}
\frac{\overline{\kappa}}{\underline{\kappa}}
<
\inf_{i\in \N}\frac{\underline{k}_{i} }{\overline{k}_{i-1}}
\; \text{ and } \; 
\; \forall \; K_{1},K_{2}\in \mathcal{M}
\; : \; 
\frac{K_{1}}{\overline{\kappa}^{\frac{n}{n-2}}}
<
K_{2} 
<
\frac{K_{1}}{\underline{\kappa} ^{\frac{n}{n-2}}}
\end{equation*}
for some 
$0<\underline{\kappa}<1<\overline{\kappa}<\infty$,
there exists at most one class $\tilde{\mathcal{K}}_{\sigma}\in \tilde{M}$ such, that
$$
\{ \partial J_{K_{\sigma}}=0 \}
\cap 
\{ J_{K_{\sigma}} \leq \frac{\overline{\kappa}}{\underline{\kappa}  } \max_{n\in N} \overline{k}_{n}\} 
=
\emptyset
$$
for at least one representative $K\in \tilde{\mathcal{K}}_{\sigma}$ is possible. 
\end{proposition}	

\begin{proof}
Recalling Definition \ref{def_sigma} and the remarks thereafter, there is nothing to show 
in cases $\sigma=0$ or $\sigma=\min N$. Hence we may assume, 
that there 
$\; \exists \; \underline{\sigma}<\sigma$ with $\underline{\sigma},\sigma \in N$.  
We then have to show
$$
\; \forall \; K_{\underline{\sigma} }\in \mathcal{K}_{\underline{\sigma}}
\; : \; 
\{ \partial J_{K_{\underline{\sigma} }} =0 \}  
\cap
\{ J_{K_{\underline{ \sigma}}} \leq \frac{\overline{\kappa}}{\underline{\kappa}  } \max_{n\in N} \overline{k}_{i}\} 
\neq 
\emptyset. 
$$
But this follows from Proposition \ref{prop_comparison_argument}, provided the conditions
\begin{enumerate}[label=(\roman*)]
\item 	
$\sfrac{K_{ \sigma}}{\kappa_{\sigma}^{\frac{n}{n-2}}}
<
K_{\underline{\sigma}}
<
\sfrac{K_{ \sigma}}{\kappa_{\sigma-1}^{\frac{n}{n-2}}}
$
\item
$
\kappa_{\sigma} \overline{k}_{\sigma-1}<\kappa_{\sigma-1}\underline{k}_{\sigma}
$
\item
$
\frac{\kappa_{\sigma}}{\kappa_{\sigma-1}}\overline{k}_{\sigma}
\leq
\frac{\overline{\kappa}}{\underline{\kappa}}\max_{i\in N}\overline{k}_{i}
$
\end{enumerate}
are satisfied for some 
$0<\kappa_{\sigma-1}<1<\kappa_{\sigma}<\infty$.
And we simply choose 
$\kappa_{\sigma-1}=\underline{\kappa}
\; \text{ and } \; 
\kappa_{\sigma}=\overline{\kappa}$. 
\end{proof}

\section{Obstructions to Non Existence}
\label{Sec_Obstruction_To_Non_Existence}
Let us discuss in case of a positive Yamabe invariant some standard arguments to prove  existence of solutions to $\partial J_{K}=0$, i.e. the conformally prescribed scalar curvature problem. In fact their failure does shed some light on, why Theorem \ref{thm1} is of theoretical interest. 
\subsection{The Euler Characteristic}  
Assuming non existence, i.e.
$\{\partial J_{K}=0\}=\emptyset,$ we have contractibility of 
\begin{equation*}
J_{K}^{L}=\{J_{K}\leq L\} 
\; \text{ for some } \; L\gg \mu(K) \; \text{ sufficiently large}, 
\end{equation*}
cf. \eqref{maximal_pure_value}.
Indeed we may first for some $\varepsilon >0$ retract by strong deformation 
$J_{K}^{L} \xhookrightarrow{\;sdr\;} J_{K}^{\mu(K)+ \varepsilon }$
and then perform a star shaped homotopy 
\begin{equation*}
J_{K}^{\mu(K)+\varepsilon }\simeq \{u_{0}\} \; \text{ for some fixed }\; u_{0}\in J_{K}^{\mu(K)},
\end{equation*}
for instance 
\begin{equation*}
H
:
J_{K}^{\mu(K)} \times [0,1]
\longrightarrow 
J_{K}^{L}
:
(u,t)\longrightarrow \frac{(1-t)u+tu_{0}}{\Vert (1-t)u+tu_{0} \Vert},
\end{equation*}
which is well defined, provided $L\gg \mu(K)$ is sufficiently large. 
We conclude
\begin{lemma}\label{lem_contractibility} 
 Let $K>0$ be a Morse function satisfying 
 \begin{equation*}
\{\nabla K=0\}\cap\{\Delta K=0\}=\emptyset.
 \end{equation*}
Then $J_{K}^{L}$ is contractible for all $L\gg \mu_{K}$ sufficiently large, provided 
$$\{\partial J_{K}=0\}=\emptyset.$$
In particular  $\chi(J_{K}^{L})=1$ for the Euler characteristic. 
\end{lemma}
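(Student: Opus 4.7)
The plan is to carry out the two-stage construction sketched in the paragraph preceding the statement. Since by hypothesis $\{\partial J_{K}=0\}=\emptyset$ and the pure critical points at infinity of $J_{K}$ are parametrised by subsets of $C_{-}(K)$ with energies bounded above by $\mu(K)$, cf. \eqref{maximal_pure_value}, above the level $\mu(K)$ the functional $J_{K}$ has neither critical points nor critical points at infinity. A standard Palais--Smale-type deformation argument (invoking the absence of any concentration behaviour above the top critical value at infinity) then produces, for every $\varepsilon>0$ and every $L>\mu(K)+\varepsilon$, a strong deformation retract
$J_{K}^{L}\xhookrightarrow{\;sdr\;}J_{K}^{\mu(K)+\varepsilon}$.

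Second, I would contract $J_{K}^{\mu(K)+\varepsilon}$ to a point via the normalised star-shaped homotopy $H(u,t)=\frac{(1-t)u+tu_{0}}{\Vert(1-t)u+tu_{0}\Vert}$ anchored at any fixed $u_{0}\in J_{K}^{\mu(K)}$. Well-definedness of $H$ is immediate: both $u$ and $u_{0}$ are non-negative and non-trivial, so the convex combination lies in the positive cone $\mathcal{A}$, ensuring that the denominator never vanishes. At $t=0$ the map recovers $u\in X$ and at $t=1$ it produces $u_{0}$, so $H$ is a homotopy in $X$ between the inclusion of $J_{K}^{\mu(K)+\varepsilon}$ and the constant map to $u_{0}$.

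The only non-trivial point, and hence the main obstacle, is to verify that $H$ takes values in some fixed sublevel $J_{K}^{L}$, so that the two deformations can be composed inside $J_{K}^{L}$. By scale invariance of $J_{K}$ this reduces to bounding $J_{K}((1-t)u+tu_{0})$ from above, uniformly in $t\in[0,1]$ and $u\in J_{K}^{\mu(K)+\varepsilon}$ normalised by $\Vert u\Vert=1$. The numerator $\Vert(1-t)u+tu_{0}\Vert^{2}$ is at most $((1-t)+t)^{2}=1$ by Cauchy--Schwarz on the $L_{g_{0}}$-inner product. For the denominator I would use the elementary subadditivity $(a+b)^{p}\geq a^{p}+b^{p}$ valid for $p=\frac{2n}{n-2}\geq 1$ and $a,b\geq 0$, obtaining $\int K((1-t)u+tu_{0})^{p}d\mu_{g_{0}}\geq (1-t)^{p}\int Ku^{p}d\mu_{g_{0}}+t^{p}\int Ku_{0}^{p}d\mu_{g_{0}}$. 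Each factor $\int Ku^{p}$ is bounded below by a positive constant determined by $\mu(K)+\varepsilon$, via the identity $J_{K}(u)=1/(\int Ku^{p}d\mu_{g_{0}})^{(n-2)/n}\leq \mu(K)+\varepsilon$; combined with the convexity estimate $(1-t)^{p}+t^{p}\geq 2^{1-p}$ this yields a single constant $L_{0}=L_{0}(K,\varepsilon)$ with $J_{K}\circ H\leq L_{0}$.

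Choosing $L\geq L_{0}$ from the outset, the concatenation of the deformation retract $J_{K}^{L}\xhookrightarrow{\;sdr\;}J_{K}^{\mu(K)+\varepsilon}$ with the homotopy $H$ contracts $J_{K}^{L}$ to the single point $u_{0}$ inside $J_{K}^{L}$, so $J_{K}^{L}$ is contractible; the Euler characteristic statement $\chi(J_{K}^{L})=1$ follows immediately.
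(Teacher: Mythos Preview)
Your proposal is correct and follows exactly the two-stage argument the paper sketches in the paragraph preceding the lemma: a strong deformation retract $J_{K}^{L}\xhookrightarrow{\;sdr\;}J_{K}^{\mu(K)+\varepsilon}$ followed by the normalised star-shaped homotopy $H$ to a fixed $u_{0}$. The paper merely asserts that $H$ lands in $J_{K}^{L}$ ``provided $L\gg\mu(K)$ is sufficiently large'', whereas you supply the explicit bound via the triangle inequality on the numerator and the subadditivity $(a+b)^{p}\geq a^{p}+b^{p}$ on the denominator; this is a genuine (and correct) fleshing-out of what the paper leaves implicit.
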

On the other hand we may compute the latter Euler characteristic by Morse theory and obtain under the assumptions of Lemma \ref{lem_contractibility}
\begin{equation}\label{index_counting_formula}
1=\chi(J_{K}^{L})=\sum_{c\in C_{\infty}(J_{K})}(-1)^{\ind(J_{K},c)}.
\end{equation}
Generally this identity may be violated, for instance in low dimensions or for higher flatness of $K$ near its critical points and in this case, 
arguing by contradiction, necessarily at least one solution to $\partial J_{K}=0$ has to exist, 
cf.  \cite{Bahri_Coron_S_3},\cite{Ben_Ayed_M_4},\cite{Chang_S_2},\cite{Li_Part_1},\cite{Li_Part_2}. The
underlying reason is, that in these cases 
$C_{\infty}(J_{K})$ 
corresponds to a proper subset 
$Q \varsubsetneq \mathbb{P}(C_{-}(K)),$
i.e. not every combination 
$$\{x_{1},\ldots,x_{q}\}\subset C_{-}(K)=\{\nabla K=0\}\cap \{\Delta K=0\}$$
gives rise to a critical point at infinity. For instance, recalling \eqref{Index_Counting_Formula_S_2} and \eqref{index_countring_formula_n_3},
in case $M=S^{2}$ or $M=S^{3}$, due to single bubbling and fine blow-up analysis, 
$$
C_{-}(K) \xrightarrow{\;\;\simeq \;\; } C_{\infty}(J_{K}) \; : \; x \longrightarrow \delta_{x} ,
$$
i.e. by bubbling each \textit{element} of $C_{-}(K)$ induces exactly one pure critical points at infinity, 
while in assumed absence of solutions to $\partial J_{K}=0$ there are non no others, and
\begin{equation*}
\ind(J_{K},\delta_{x})=-m(K,x).
\end{equation*}
Thereby the index counting formulae \eqref{Index_Counting_Formula_S_2}  and \eqref{index_countring_formula_n_3}
easily follow from \eqref{index_counting_formula}. 

\
 
In our setting, i.e. under \eqref{base_assumption}, 
however the index counting formula \eqref{index_counting_formula} always
\footnote{This was pointed out to us by Prof. M. Ahmedou, cf. \cite{Ahmedou_Degree_Zero}}
holds true and no existence result can be obtained from that identity.
\begin{lemma}\label{lem_index_counting_trivial}
 Let $n\geq 5$ and $K>0$ be a Morse function satisfying 
 \begin{equation*}
\{\nabla K=0\}\cap\{\Delta K=0\}=\emptyset.
 \end{equation*}
Then 
$1=\sum_{c\in C_{\infty}(J_{K})}(-1)^{\ind(J_{K},c)}$.
\end{lemma}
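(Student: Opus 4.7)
The plan is to reduce the claimed identity to a purely combinatorial identity over the set $C_-(K)$, via the bijection $\mathbb{P}(C_-(K))\setminus\{\emptyset\}\xrightarrow{\simeq}C_\infty(J_K)$ from Section \ref{Section_The_Variational_Formulation} together with the index formula \eqref{index_J_K_Formula}. Writing $\epsilon_x=(-1)^{n-m(K,x)}$ for $x\in C_-(K)$, the index formula gives
\begin{equation*}
(-1)^{\ind(J_K,u_{\infty,x_{i_1},\ldots,x_{i_q}})}
=(-1)^{q-1}\prod_{j=1}^{q}\epsilon_{x_{i_j}},
\end{equation*}
so the sum to be computed becomes
\begin{equation*}
\sum_{c\in C_\infty(J_K)}(-1)^{\ind(J_K,c)}
=\sum_{\emptyset\neq S\subseteq C_-(K)}(-1)^{|S|-1}\prod_{x\in S}\epsilon_x.
\end{equation*}

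Next I would recognise the right-hand side as (minus) the standard expansion of a product. Indeed, subtracting and re-adding the empty-set contribution,
\begin{equation*}
\sum_{\emptyset\neq S\subseteq C_-(K)}(-1)^{|S|-1}\prod_{x\in S}\epsilon_x
=1-\sum_{S\subseteq C_-(K)}\prod_{x\in S}(-\epsilon_x)
=1-\prod_{x\in C_-(K)}(1-\epsilon_x).
\end{equation*}
Thus it suffices to produce one point $x_0\in C_-(K)$ with $\epsilon_{x_0}=1$, i.e.\ with $m(K,x_0)$ of the same parity as $n$, so that the product vanishes and the identity $1=1-0$ falls out.

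The key observation — which is the only step where one uses the geometry rather than pure combinatorics — is that any global maximum $x_0$ of $K$ on the closed manifold $M$ automatically lies in $C_-(K)$ with $m(K,x_0)=n$. Indeed $\nabla K(x_0)=0$ and the Hessian at $x_0$ is negative definite, so $\Delta K(x_0)=\operatorname{tr}\nabla^2 K(x_0)<0$, compatible with the non-degeneracy hypothesis $\{\nabla K=0\}\cap\{\Delta K=0\}=\emptyset$. Hence $m(K,x_0)=n$ and $\epsilon_{x_0}=(-1)^0=1$, so the factor $1-\epsilon_{x_0}=0$ appears in the product and forces the whole product to vanish.

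I expect no real obstacle here: once the bijection with $\mathbb{P}(C_-(K))\setminus\{\emptyset\}$ and the index formula are in hand, the identity is an elementary manipulation; the only \emph{geometric} input is the trivial remark that $K$ attains its maximum on the compact $M$, and that any such maximum contributes an index-$n$ critical point to $C_-(K)$. The content of the lemma is therefore that, in contrast to the low-dimensional or flatness-obstructed situations alluded to after \eqref{index_counting_formula}, under \eqref{base_assumption} the counting formula \eqref{index_counting_formula} is an \emph{unconditional} tautology and hence cannot by itself force existence of a solution to $\partial J_K=0$ — which is precisely why Theorem \ref{thm1} is of interest.
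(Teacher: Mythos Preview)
Your proof is correct and follows essentially the same approach as the paper: both rely on the bijection $\mathbb{P}(C_-(K))\setminus\{\emptyset\}\simeq C_\infty(J_K)$, the index formula \eqref{index_J_K_Formula}, and the observation that a global maximum $y$ of $K$ lies in $C_-(K)$ with $m(K,y)=n$. The only difference is packaging: the paper pairs subsets $A\not\ni y$ with $A\cup\{y\}$ and checks their contributions cancel, leaving only $\{y\}$, whereas your product identity $1-\prod_{x\in C_-(K)}(1-\epsilon_x)$ encodes exactly this cancellation in closed form via the vanishing factor $1-\epsilon_y=0$.
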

\begin{proof}
From the one to one
\footnote{See \cite{Ahmdou_Non_Simple_Blow_Up} for a case of non simple blow-up}
 correspondence 
$\mathbb{P}(C_{-}(K)) \xrightarrow{\; \simeq \;} C_{\infty}(J_{K})$
we have
\begin{equation*}
\sum_{c\in C_{\infty}(J_{K})}(-1)^{\ind(J_{K},c)}
=
\sum_{\{x_{1},\ldots,x_{q}\}\subseteq C_{-}(K)}
(-1)^{\ind(J_{K},u_{\infty,x_{1},\ldots,x_{q}})}.
\end{equation*}
Choosing $y\in \{K=\max K\}\subseteq C_{-}(K)$, we decompose 
$
C_{-}(K)
=
C_{1}+C_{2}+C_{3}
$,
where
\begin{enumerate}[label=(\roman*)]
 \item $C_{1}=\{y\}$
 \item $C_{2}=\{A_{-}\subset C_{-}(K)\;:\; y \not \in A_{-}\}$
  \item $C_{3}=\{A_{+}\subseteq C_{-}(K)\;:\;  y\in A_{+}\neq \{y\}\}$.
\end{enumerate}
We then have a bijection
$
C_{2}\xrightarrow{\;\simeq \; } C_{3}
: \{x_{1},\ldots,x_{q}\}\longrightarrow \{x_{1},\ldots,x_{q},y\}
$ 
and there holds, cf \eqref{index_J_K_Formula},
\begin{enumerate}[label=(\roman*)]
 \item 
 $
 \ind(J_{K},u_{\infty,x_{1},\ldots,x_{q}})
 =
 (q-1)+\sum_{i=1}^{q}(n-m(K,x_{i}))
 $
 \item 
 $
 \ind(J_{K},u_{\infty,x_{1},\ldots,x_{q},y})
 =
q+(\sum_{i=1}^{q}(n-m(K,x_{i}))+(n-m(K,y))).
$
\end{enumerate}
In particular, since $y$ is a maximum of $K$, we have 
$m(K,y)=n$ and obtain 
\begin{equation*}
\ind(J_{K},u_{\infty,x_{1},\ldots,x_{q},y})
=
\ind(J_{K},u_{\infty,x_{1},\ldots,x_{q}})+1.
\end{equation*}
We thus find cancellation of $C_{2}$ with $C_{3}$ in the sense, that
\begin{equation*}
\begin{split}
\sum_{c\in C_{\infty}(J_{K})}(-1)^{\ind(J_{K},c)}
= &
\sum_{\{x_{1},\ldots,x_{q}\}\subseteq C_{1}+C_{2}+C_{3}}
\hspace{-12pt}
(-1)^{\ind(J_{K},u_{\infty,x_{1},\ldots,x_{q}})}
= 
(-1)^{\ind(J_{K},u_{\infty,y})},
\end{split}
\end{equation*}
whence the claim follows due to 
$\ind(J_{K},u_{\infty,y})=n-m(K,y)=0$.   
\end{proof}
Hence \textit{this} standard argument for existence based on the computation of the Euler characteristic by showing contractibility on the one hand and counting Morse indices on the other
is not always feasible, which is to say, that the computation of the Euler characteristic does not provide an obstruction to non existence.
However, even if \eqref{index_counting_formula} holds true or in other words, if the total degree is zero, one may still obtain existence of solutions by contradiction arguments, if  
\begin{enumerate}[label=(\roman*)]
\item some finer homological identities would is violated, cf. \cite{Ahmedou_Degree_Zero}.
\item a min-max scheme leads to a critical point at infinity of certain indices, which is not there, cf.
\cite{Bianchi_Min_Max},\cite{MM4},\cite{MM8} and Theorem \ref{thm2}.
\item the comparability of sublevels of different functionals are incompatible with non existence assumptions, as we argue here and in \cite{MM4}.
\end{enumerate}

\subsection{Connecting Orbits}\label{section_connecting_orbits}

Let us consider the smooth heart $S\simeq S^{2}$ as depicted in Figure \ref{fig_smooth_heart} on $S^{3}$, 
which is perhaps the most simple, yet ambiguous example.  
Every argument clarifying this situation will - in our opinion - be a cornerstone in the future development, not only within, but also beyond the realm of scalar curvature problems. 

\

Let us describe upon a choice of orientation the Morse homology over 
$\mathbb{Z}_{2}$.
\begin{enumerate}
\item[0)]
$
H_{0}(S^{3})
= 
\sfrac{\text{ker}\partial_{0}}{\text{Im}\partial_{1}}
=
\langle x_{0} \rangle_{\mathbb{Z}_{2}}
=
\mathbb{Z}_{2},
$
since $\partial_{1}x_{1}=0$.
\item[1)]
$
H_{1}(S^{3})
=
\sfrac{\text{ker}\partial_{1}}{\text{Im}\partial_{2}}
=
\sfrac
{\langle x_{1}\rangle_{\mathbb{Z}_{2}}}
{\langle x_{1}\rangle_{\mathbb{Z}_{2}}}
=
\mathbb{0}$, 
since $\partial_{2}x_{2}^{1}=x_{1}$ or $\partial_{2}x_{2}^{2}=-x_{1}$ 
\item[2)]
$
H_{2}(S^{3})
=
\sfrac{\text{ker}\partial_{2}}{\text{Im}\partial_{3}}
=
\sfrac
{\langle x_{2}^{1}+x_{2}^{2}\rangle_{\mathbb{Z}_{2}}}
{\text{Im}\partial_{3}}
=
\mathbb{0}$, 
since $\partial_{2}(x_{2}^{1}+x_{2}^{2})=\mathbb{0}$ and 
\begin{equation}\label{necessary_connection}
x_{2}^{1}+x_{2}^{2}\in \text{Im}\partial_{3}.
\end{equation}
To see \eqref{necessary_connection} note, that $x_{2}^{1},x_{2}^{2}$ have each exactly one positive direction orthogonal to $S$ pointing in- and outside of $S$. Following these directions, we must from both $x_{2}^{1}$ and $x_{2}^{2}$ arrive with a unique, single orbit at each maxima $x_{3}^{1},x_{3}^{2}$, cf. \eqref{height_relations}. As a consequence 
\begin{equation}\label{single_connection_3_2}
\partial_{3}x_{3}^{1}=(x_{2}^{1}+ x_{2}^{1})
\; \text{ and } \; 
\partial_{3}x_{3}^{2}=-(x_{2}^{1} +  x_{2}^{1})
\end{equation}

\item[3)]
$
H_{3}(S) 
=
\sfrac{\text{ker}\partial_{3}}{\text{Im}\partial_{4}}
=
\text{ker}\partial_{3}
=
\mathbb{Z}_{2},
$ since clearly $\text{Im}\partial_{4}=\mathbb{0}$ and by \eqref{single_connection_3_2}
$$\partial_{3}(x_{3}^{1}+x_{3}^{2})=\mathbb{0}.$$
\end{enumerate}

Passing to the prescribed scalar curvature functional $J_{K}$, suppose, that we cannot solve $\partial J_{K}=0$, while $K>0$ Morse satisfies 
\begin{equation}\label{Non_Degeneracy_Smooth_Heart}
\{ \nabla K=0 \} \cap \{ \Delta K=0 \} = \emptyset.
\end{equation}
Then by \cite{Bahri_Coron_S_3} 
the critical points at infinity of $J_{K}$ correspond exactly to single Diracs at elements of 
$$
C_{-}(K)=\{ \nabla K=0 \} \cap \{ \Delta K<0 \}  
$$
with index 
$\ind(J_{K})=m(K^{-1})$. 
Since 
$\Delta K(x_{0})<0<\Delta K(x_{3}^{1}),\Delta K(x_{3}^{2})$ 
due to \eqref{Non_Degeneracy_Smooth_Heart},  
\eqref{index_counting_formula} leaves us with 

\begin{enumerate}
\item[1.)] $C_{-}(K)=\{ x_{0} \}$
\item[2.)] $C_{-}(K)=\{ x_{0},x_{1},x_{2}^{1} \} $
\item[3.)] $C_{-}(K)=\{ x_{0},x_{1},x_{2}^{2} \} $
\end{enumerate}
as possibilities for non existence 
and 3.) is ruled out by Theorem \ref{thm2}.  
\begin{proof}[\textbf{Proof of Theorem \ref{thm2}}] 
Passing to the subcritical problem, first note, that for
$$ \label{proof_thm_2}
u_{\tau,x_{2}^{2}}\in \{ \partial J_{K,\tau}=0 \} 
\; \text{ and } \; \tau \gg 1,
$$
cf. \eqref{type_of_subcritical_solutions},
there holds
\begin{equation}\label{inject_into_smaller_sublevel}
J_{K,\tau}(\varphi_{x_{2}^{1},\lambda})
<
J_{K,\tau}(u_{\tau,x_{2}^{2}})
\end{equation}
for a suitable choice of $\lambda \simeq \tau^{-\frac{1}{2}}\gg 1$. 
Clearly \eqref{inject_into_smaller_sublevel} holds in case of strict inequality in (i),
but also follows easily in case of equality by expansion from Proposition 5.1 in \cite{MM1} using, that there
\begin{enumerate}[label=(\roman*)]
\item 	$\varepsilon_{i,j}=0$, as $q=1$
\item   $H_{i}=0$, as $M=S^{3}$
\item   $\vert \partial J_{\tau}\vert=\vert\partial J_{K,\tau}\vert=0$  at a solution or, if $v=0$.
\end{enumerate}
With this in mind, we first prove the existence of a solution
\begin{equation}\label{existence_of_solution_smooth_heart}
u_{\tau,x_{0}},u_{\tau,x_{1}},u_{\tau,x_{2}^{2}} 
\neq 
y
\in 
\{ \partial J_{K,\tau}=0\} 
\cap 
\{ J_{K,\tau}\leq J_{K,\tau}(u_{\tau,x_{2}^{2}}) \}.
\end{equation}
In fact and arguing by contradiction suppose
$$\{ u_{\tau,x_{0}},u_{\tau,x_{1}},u_{\tau,x_{2}^{2}} \} 
= 
\{ \partial J_{K,\tau}=0 \} \cap \{ J_{K,\tau}\leq J_{K,\tau}(u_{\tau,x_{2}^{2}})\}.
$$
By virtue of \eqref{inject_into_smaller_sublevel} we then may inject by bubbling for some 
$
\sigma < J_{K,\tau}(u_{\tau,x_{2}^{2}})
$
\begin{equation}\label{injection}
i:B^{2}_{1}(0)\simeq \overline{W_{u}^{K^{-1}}(x_{2}^{1})}
\longrightarrow 
 \{ J_{K,\tau}\leq \sigma \} 
 :
 x\longrightarrow \alpha_{x}\varphi_{a_{x},\lambda_{x}}+v_{x}
\end{equation}
for suitable, continuous maps $x\longrightarrow \alpha_{x},a_{x},\lambda_{x},v_{x}$ and under this map
\begin{equation}\label{immersion_of_unstable_manifolds}
i(\overline{W_{u}^{K^{-1}}(x_{1})})
\simeq
\overline{W_{u}(u_{\tau,x_{1}})}
\end{equation}
are homeomorphic. In fact  \eqref{immersion_of_unstable_manifolds} is immediate from the Morse structure of
$K$ around $x_{1}$ and of $J_{K,\tau}$ around $u_{\tau,x_{1}}$, cf. \cite{MM2}. 
On the other hand
$$
\{ J_{K,\tau}\leq \sigma \}
\xhookrightarrow{\; wdr\;} \overline{W_{u}(u_{\tau,x_{1}})}
$$
as a weak deformation retract, if we deform $\{ J\leq \sigma \}$ along 
the negative gradient flow locally patched with a strong deformation retracts  
\begin{enumerate}[label=(\roman*)] 
\item[a)]  onto $W_{u}(u_{\tau,x_{1}})$ close to $u_{\tau,x_{1}}$ 
\item[b)]  onto $u_{\tau,x_{0}}$, e.g. induced by flowing close to, but not at $u_{\tau,x_{0}}$ along 
$
\partial_{t}u
=
-
\frac{\nabla J_{K,\tau}}{\Vert \nabla J_{K,\tau} \Vert }(u),
$  
\end{enumerate}
both of which leave
$\overline{W_{u}(u_{\tau,x_{1}})}$ invariant as well.
From \eqref{injection} and \eqref{immersion_of_unstable_manifolds}
we then obtain a continuous map
$$
r:B^{2}_{1}(0)\longrightarrow S^{1}
\; \text{ with } \;
id_{S^{1}}\simeq r\lfloor_{S^{1}}: S^{1}\longrightarrow S^{1}
$$
homotopic to the identity on $S^{1}\simeq \overline{W_{u}(u_{\tau,x_{1}})}$ in contradiction to Brouwer's fixed point theorem.
Hence \eqref{existence_of_solution_smooth_heart} and,
 letting $\tau \longrightarrow 0$, the assertion follows. 
\end{proof}

\begin{remark} We remark, that
\begin{enumerate}[label=(\roman*)]
\item 
$K(x_{2}^{1})=K(x_{2}^{2})$ and therefore coinciding potentially critical values for $J_{K}$ are permitted in Theorem \ref{thm2}, but not in Theorem \ref{thm1}.

\item there is another way of seeing Theorem \ref{thm2}.
Arguing by contradiction,  the functional $J_{K,\tau}$ is Morse with exactly two Morse homological cycles
$\mathfrak{c}_{0},\mathfrak{c}_{1}\in \{ J_{K,\tau} \leq \sigma\}$
on the latter set, which are induced by $u_{\tau,x_{0}}$ and $u_{\tau,x_{1}}$. In particular
$$\mathfrak{M}_{1}(\{ J_{K,\tau}\leq \sigma \} )=\mathbb{Z}_{2}$$ 
for the first Morse homology group.  
But Morse and singular homology coincide, since the critical points of $J_{K,\tau}$ have finite indices, cf. \cite{Abbondandolo}. 
And in singular homology the only possible cycle
$$c_{1}\in \textnormal{ker}\, \partial_{1}$$  
is up to homotopy the unstable manifold of
$u_{\tau,x_{1}}$, which is however the boundary of the 2-cell 
$$c_{2}=i(\overline{W^{K^{-1}}_{u}(x_{2}^{1})}).$$ 
Hence 
$H_{1}(\{ J_{K,\tau} \leq \sigma \} )=\mathbb{0}$ and this is a contradiction. 
\end{enumerate}
\end{remark}

Theorem \ref{thm2} leaves us with the possibilities 1.) and 2.) and both are variationally stable, as follows.
\begin{enumerate}
\item[1.)] In this case $u_{\infty,x_{0}}$ is the only critical point at infinity of $J_{K}$, which acts as a minimum. 
Correspondingly $u_{\tau,x_{0}}$ is a minimum for the subcritical approximation $J_{K,\tau}$
and the only energy bounded, zero weak limit critical point.
\item[2.)] Also in this case we see variational stability \textit{at infinity} for the functional $J_{K}$
as well as for its subcritical approximation $J_{K,\tau}$. 
\begin{enumerate}[label=(\roman*)]
\item 
Consider some $\sigma< J_{K}(\alpha_{x_{2}^{2}}\delta_{x_{2}^{2}})$ and for $0<\varepsilon \ll 1 $
$$
u_{0}
=
\alpha_{0}\varphi_{a_{0},\lambda_{0}}+v_{0}
\in
V(1,\varepsilon)\cap \{ J\leq \sigma \} 
\; \text{ with } \; 
\Vert v_{0} \Vert \ll \frac{1}{\lambda_{0}^{2}}.
$$
Note, that the latter set is a neighbourhood of
$u_{\infty,x_{0}},u_{\infty,x_{1}}$ and $u_{\infty,x_{2}^{1}}$. Given thus $u_{0}$ as an initial data
consider a flow line
$$
\partial_{t}u=-\nabla J_{K}(u)
\; \text{ with } \; 
u(0)=u_{0}.
$$
Then, as long as $u$ remains in some neighbourhood
$ 
V(1,\epsilon)
$ with $
0\ll \varepsilon \ll \epsilon \ll 1
$
we have with positive constants $c_{1},c_{2},c_{3}>0$
\begin{enumerate}[label=(\roman*)]
\item[($\alpha$)]
$
\dot\alpha
=
O(\frac{\vert \nabla K(a) \vert^{2} }{\lambda^{2}}+\frac{1}{\lambda^{4}}
+
\Vert v \Vert^{2}
)
$ 	
\item[($a$)]
$
\lambda \dot a
=
-
c_{1}\frac{\nabla K(a)}{K^{\frac{5}{4}}(a)\lambda}+o(\frac{1}{\lambda})
+
O(\Vert v \Vert^{2})
$
\item[($\lambda$)]
$
\frac{\dot \lambda}{\lambda}
=
-c_{2}\frac{\Delta K(a)}{K^{\frac{5}{4}}(a)\lambda^{2}}
+
o(\frac{1}{\lambda^{2}})
+
O
(
\frac{\vert \nabla K(a) \vert^{2} }{\lambda^{2}}
+
\Vert v \Vert^{2}
)
$
\item[($v$)]
$
\partial_{t}\Vert v \Vert^{2}
\leq 
-c_{3}\Vert v \Vert^{2}
+
O(\frac{\vert \nabla K(a) \vert^{2} }{\lambda^{2}}+\frac{1}{\lambda^{4}})
$.
\end{enumerate}
In fact ($\alpha$) follows from (100) in \cite{Bahri_Coron_S_3},
since 
$c<\alpha<C$
due to $\Vert u \Vert=1$, and by scaling invariance
$$
\partial J(u)\alpha \varphi_{a,\lambda}
=
\partial J(u)u
-
\partial J(u)v
=
O(\vert \partial J(u) \vert^{2}+\Vert v \Vert^{2} ),
$$
while we have the estimates
\begin{equation}\label{rough_expansion_d_J_u}
\vert \partial J(u) \vert
=
\vert \partial J(\alpha \varphi_{a,\lambda}) \vert  
+
O(\Vert v \Vert)
\end{equation}
and, as follows by direct calculation, 
\begin{equation}\label{derivate_estimate_at_pure_bubble}
\vert \partial J(\alpha \varphi_{a,\lambda}) \vert 
=
O
(
\frac{\vert \nabla K(a) \vert }{\lambda}
+
\frac{1}{\lambda^{2}}
).
\end{equation}
Secondly ($a$),($\lambda$) follow from Lemma 6 in 
\cite{Bahri_Coron_S_3}  
using again 
\eqref{rough_expansion_d_J_u} 
and 
\eqref{derivate_estimate_at_pure_bubble} 
above to estimate $\vert \partial J(u) \vert $. 
Finally to verify $(v)$ we calculate 
\begin{equation*}
\begin{split}
\partial_{t} \Vert v \Vert^{2}
= &
\langle \partial_{t}v,v\rangle
=
\langle \partial_{t}u,v\rangle
=
-\partial J(u)v 
\leq 
-\partial J(\alpha \varphi_{a,\lambda})v-\tilde{c}_{2}\Vert v \Vert^{2} \\
\leq &
-c_{2}\Vert v \Vert^{2}
+
O
(
\frac{\vert \nabla K(a) \vert }{\lambda}
+
\frac{1}{\lambda^{2}}
)
\end{split}
\end{equation*}

using \eqref{derivate_estimate_at_pure_bubble} above and from \cite{Bahri_Coron_S_3}
Lemma A2 and (Vo).

\

Hence ($\alpha$),($a$),($\lambda$) and ($v$) are established
and from ($v$) we deduce
\begin{equation}\label{v_control}
\Vert v \Vert
\lesssim
\frac{\vert \nabla K(a) \vert}{\lambda}+\frac{1}{\lambda^{2}}
\ll \epsilon
\end{equation}
as long as $u\in V(1,\epsilon)$.  And during that time 
$
\partial_{t} (K^{-1}(a)\ln(\lambda))
>0
$
by ($a$) and ($\lambda$),
since $\Delta K(a)<0$ close to $x_{0},x_{1},x_{2}^{1}$, which for energy reasons are the only reachable critical points of $K$. 
Hence $\lambda$ cannot substantially decrease and we may assume
\begin{equation}\label{lambda_control}
\sfrac{1}{\lambda} \ll \epsilon.
\end{equation}
as long as $u\in V(1,\epsilon)$. But then from \eqref{v_control} and \eqref{lambda_control} we conclude, that $u$ can actually never leave 
$V(1,\epsilon)$. Moreover from \eqref{lambda_control} and ($a$)
we then find, that 
$a$ converges to $x_{0},x_{1}$ or $x_{2}^{1}$, where by ($\lambda$) evidently 
$\lambda\longrightarrow \infty$ and due to \eqref{v_control} also
$\Vert v \Vert\longrightarrow 0$. 

\

We summarise this by saying, that full neighbourhoods of all the pure critical points at infinity 
remain concentrated along the negative gradient flow. In particular any flow line leading from one of them to the other does so in a concentrated way. 
Note, that similar shadow flow analyses of given flows have been performed in 
\cite{Bahri_Critical_Points_At_Infinity},\cite{MM_Scf},\cite{MM3},\cite{MM5}. 
\item An analogous argument based on the evolution of $\alpha,a,\lambda$ and $v$ under
$\partial_{t} u=-\nabla J_{K,\tau}(u)$
will show, that also some neighbourhoods and then also the  unstable manifolds of
$u_{\tau,x_{0}},u_{\tau,x_{1}}$ and $u_{\tau,x_{2}^{1}}$
remain in some
$V(1,\epsilon)$, i.e. remain of type $u=\alpha \delta_{a,\lambda}+v$ and of course one may choose $\epsilon\longrightarrow 0$ as $\tau\longrightarrow 0$.
This tells us, that the Morse complex, which these critical points induce, 
is completely and classically describable and this without any homological inconsistency. 
We leave this claim without a proof, since the concrete computations would go beyond the scope of this paper. However note, that transversality of the intersection
of the stable and unstable manifolds  is generic, since $u_{\tau,x_{0}}$ is a minimum
and for the relative Morse index, cf. \cite{Abbondandolo}, we have
$$
m(J_{K,\tau}u_{\tau,x_{2}^{1}})
-
m(J_{K,\tau}u_{\tau,x_{1}})
=
1
$$
\end{enumerate}
\end{enumerate}
 
Now, while neither for $K_{1}$ satisfying 1.) nor for $K_{2}$ satisfying 2.) we see any \textit{obvious} obstruction to non existence, we may ask, whether assumed non existence for both 
$K_{1}$ and $K_{2}$ is compatible, i.e. whether
$\{\partial J_{K_{1}}=0\}=\emptyset = \{\partial J_{K_{2}}=0 \}$
leads to a contradiction, hence establishing
$\{\partial J_{K_{1}}=0\}\neq \emptyset$ or $\emptyset \neq \{\partial J_{K_{2}}=0 \}$.
And this is exactly, what Theorem \ref{thm1} addresses.

\section{Appendix}
\subsection{Homotopy Equivalences}
We derive some elementary results on homotopy equivalences, denoted by  
$X\simeq Y$
for two topological sets $X$ and $Y$, i.e. the existence of continuous mappings
\begin{equation*}
\begin{split}
f:X\longrightarrow Y \; \text{ and } \; g:Y\longrightarrow X
\; \text{ with } \; 
g\circ f \simeq id_{X},\; 
f\circ g \simeq id_{Y},
\end{split}
\end{equation*}
where we denote by $\phi\simeq \varphi$ the existence of a homotopy
\begin{equation*}
\begin{split}
H: X \times [0,1]\longrightarrow Y
\; \text{ with } \; 
H(\cdot,0)=\phi
\; \text{ and } \; 
H(\cdot,1)=\varphi.
\end{split}
\end{equation*}
Moreover and in order to have a granular description we convene for
$A \subset C$ and $B \subset D$
to say, that
\begin{enumerate}[label=(\roman*)]
\item \quad $A$ is homeomorphic in $C$ to $B$ in $D$, if there exists a homeomorphism
\begin{equation*}
\begin{split}
h\in Homeo(C,D) \quad \text{ with } \quad h(A)=B
\end{split}
\end{equation*}   
\item \quad $A$ is homeomorphic to $B$ in $C=D$, if $C=D$ in the former case
\item \quad $A$ is homeomorphic to $B$ in case $A=C$ and $B=D$.   
\end{enumerate}
Finally recall, that 
$Y\subseteq X$ is called a weak deformation retract, if there exists
\begin{equation*}
\begin{split}
H: X \times [0,1] \longrightarrow X
\; \text{ with } \; 
H\lfloor_{ Y \times [0,1]}: Y \times [0,1] \longrightarrow Y
\end{split}
\end{equation*}
satisfying 
$H(\cdot,0)=id_{X}$
and
$H(\cdot,1):X\longrightarrow Y$.
A weak deformation retract induces a homotopy equivalence and, if satisfying
$H(y,t)=y$ for all $(y,t)\in  Y \times [0,1]$, 
is called a strong deformation retract. 

\begin{lemma}\label{lem_homotopy_equivalence} 
Let $A\subseteq B \subseteq C \subseteq D$ be such, that
$A\simeq C$ and $B \simeq D$
are homotopy equivalent under 
\begin{equation*}
\tilde f:D\longrightarrow B
\; \text{ and } \; 
\hat f:C\longrightarrow A
\end{equation*}
with inverse
\begin{equation*}
\tilde g:B \longrightarrow D
\; \text{ and }\; 
\hat g : A \longrightarrow C.
\end{equation*}
Then $A\simeq B \simeq C \simeq D$ are homotopically equivalent, provided there exist
\begin{enumerate}[label=(\roman*)]
\item \quad 
$\hat g \simeq \hat g^{\sharp}:A\longrightarrow C$ 
with 
$Im(\hat g^{\sharp})\subseteq B$ 
\item \quad
$\tilde g^{\sharp}:C\longrightarrow D$ 
with 
$\tilde g^{\sharp}\lfloor_{B} \; \simeq \tilde g$.  
\end{enumerate}
\end{lemma}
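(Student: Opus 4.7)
The plan is to reduce the statement to establishing a single homotopy equivalence $B\simeq C$, since $A\simeq C$ and $B\simeq D$ are already granted and transitivity of $\simeq$ then yields the full chain $A\simeq B\simeq C\simeq D$. I would take the inclusion $j:B\hookrightarrow C$ as the candidate map and exhibit, separately, a left and a right homotopy inverse for it, thereby extracting the two halves of the homotopy equivalence from the asymmetric hypotheses (i) and (ii).

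First, using (ii), set $\psi_{1}=\tilde f\circ \tilde g^{\sharp}:C\to D\to B$. Then $\psi_{1}\circ j=\tilde f\circ \tilde g^{\sharp}\lfloor_{B}\simeq \tilde f\circ \tilde g\simeq \mathrm{id}_{B}$, so $\psi_{1}$ is a left homotopy inverse of $j$. Second, using (i), the image of $\hat g^{\sharp}$ lies in $B$, so the composition $\psi_{2}=\hat g^{\sharp}\circ \hat f:C\to A\to B$ is a well defined map into $B$; but when we postcompose with $j$ to land back in $C$, the resulting map is literally $\hat g^{\sharp}\circ \hat f$ with $\hat g^{\sharp}$ now read as a map $A\to C$, and hence $j\circ \psi_{2}\simeq \hat g\circ \hat f\simeq \mathrm{id}_{C}$. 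Thus $\psi_{2}$ is a right homotopy inverse of $j$.

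From here the classical categorical argument closes the case: $\psi_{1}\simeq \psi_{1}\circ j\circ \psi_{2}\simeq \psi_{2}$, so $j\circ \psi_{1}\simeq \mathrm{id}_{C}$ and together with $\psi_{1}\circ j\simeq \mathrm{id}_{B}$ one concludes that $j$ is a homotopy equivalence, i.e.\ $B\simeq C$. Combined with the assumed $A\simeq C$ and $B\simeq D$ this yields the claimed chain of homotopy equivalences.

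I do not anticipate a real obstacle in this argument. The only genuine delicacy is the double reading of $\hat g^{\sharp}$ once as a map $A\to B$ and once as a map $A\to C$ (and symmetrically of $\tilde g^{\sharp}$ once restricted to $B$ and once defined on all of $C$); conditions (i) and (ii) have however been calibrated precisely so that these two readings are interchangeable up to homotopy within the ambient $C$, respectively $D$. Bookkeeping the four nested containments $A\subseteq B\subseteq C\subseteq D$ while performing the compositions is the only point where care is needed.
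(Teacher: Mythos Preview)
Your argument is correct. The paper takes a slightly different route: instead of showing that the inclusion $j:B\hookrightarrow C$ is a homotopy equivalence, it proves $A\simeq D$ directly by exhibiting the pair
\[
F=\hat f\circ\tilde f:D\to A,\qquad G=\tilde g^{\sharp}\circ\hat g^{\sharp}:A\to D,
\]
and checking $G\circ F\simeq id_{D}$ and $F\circ G\simeq id_{A}$ via the same two homotopies encoded in (i) and (ii). Your approach has the mild advantage of naming the natural inclusion as the equivalence and deducing two-sidedness from the standard ``left inverse $+$ right inverse $\Rightarrow$ two-sided inverse'' trick, which keeps the bookkeeping with the nested inclusions transparent; the paper's approach avoids that extra categorical step at the cost of slightly longer compositions. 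Both arguments use exactly the same content of (i) and (ii), and neither needs anything beyond what you wrote.
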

\begin{proof}
Let us first recall 
\begin{equation*}
\tilde g \circ \tilde f\simeq id_{D}
,\;
\tilde f \circ \tilde g \simeq id_{B}
,\; 
\hat g \circ \hat f\simeq id_{C}
,\;
\hat f \circ \hat g \simeq id_{A}
\end{equation*}
and note, that proving $A\simeq D$ is sufficient. To that end consider
\begin{equation*}
F
=
\hat f \circ \tilde f
:
D 
\overset{\tilde f}{\longrightarrow} 
B \subseteq C 
\overset{\hat f}{\longrightarrow} 
A
\end{equation*}
and 
\begin{equation*}
G
=
\tilde g^{\sharp} \circ \hat g^{\sharp}
:
A
\overset{\hat g^{\sharp}}{\longrightarrow}
B\subseteq C
\overset{\tilde g^{\sharp}}{\longrightarrow}
D.
\end{equation*}
We then have 
\begin{equation*}
G\circ F
=
\tilde g^{\sharp} \circ \hat g^{\sharp} \circ \hat f \circ \tilde f
\simeq 
\tilde g^{\sharp} \circ \hat g \circ \hat f \circ \tilde f
\simeq 
\tilde g^{\sharp} \circ id_{C} \circ \tilde f
=
\tilde g^{\sharp}  \circ \tilde f
\simeq 
\tilde g  \circ \tilde f
\simeq 
id_{D}
\end{equation*}
and 
\begin{equation*}
F\circ G
=
\hat f \circ \tilde f \circ \tilde g^{\sharp} \circ \hat g^{\sharp} 
\simeq 
\hat f \circ \tilde f \circ \tilde g \circ \hat g^{\sharp} 
\simeq 
\hat f \circ id_{B} \circ \hat g^{\sharp} 
=
\hat f \circ \hat g^{\sharp} 
\simeq 
\hat f \circ \hat g 
\simeq 
id_{A}. 
\end{equation*}
Hence $F$ with inverse $G$ provides a homotopy equivalence 
$A\simeq D$. 
\end{proof}
\begin{corollary}\label{cor_homotopy_equivalence}
Let $A\subseteq B \subseteq C \subseteq D$ be such, that
\begin{enumerate}[label=(\roman*)]
\item \quad  $A$ is homeomorphic to a weak deformation retract of $C$in $C$ 
\item \quad	$B$ is homeomorphic to a weak deformation retract of $D$ in $D$.
\end{enumerate}
Then  $A\simeq B \simeq C \simeq D$ are homotopy equivalent.
\end{corollary}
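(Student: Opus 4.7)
The plan is to apply Lemma \ref{lem_homotopy_equivalence}, where the main observation is that hypotheses (i) and (ii) actually upgrade to the stronger statement that $A$ is \emph{itself} a weak deformation retract of $C$ and $B$ is \emph{itself} a weak deformation retract of $D$; this upgrade is just a conjugation by the given homeomorphism.

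First I would establish the upgrade. Let $A' \subseteq C$ be a weak deformation retract via $H_1: C \times [0,1] \to C$, and let $h_1 \in Homeo(C,C)$ realize $h_1(A) = A'$. Then
\[
\tilde H_1(c,t) := h_1^{-1}(H_1(h_1(c), t))
\]
exhibits $A$ as a weak deformation retract of $C$: at $t=0$ it equals $c$, at $t=1$ it lands in $h_1^{-1}(A') = A$, and for $a \in A$ the invariance of $H_1$ on $A'$ forces $\tilde H_1(a,t) \in h_1^{-1}(A') = A$. Set $r_A := \tilde H_1(\cdot, 1): C \to A$. The analogous conjugation with $h_2 \in Homeo(D,D)$ produces a retraction $r_B: D \to B$ exhibiting $B$ as a weak deformation retract of $D$.

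Next I would feed this into Lemma \ref{lem_homotopy_equivalence} by taking $\hat f := r_A$, $\hat g := \iota_{A,C}$, $\tilde f := r_B$, $\tilde g := \iota_{B,D}$, where the $\iota$'s denote inclusions; the standard weak deformation retract property gives $\hat g \circ \hat f \simeq id_C$, $\hat f \circ \hat g \simeq id_A$, and analogously for the other pair, so $A \simeq C$ and $B \simeq D$ in the form needed by the lemma. For the auxiliary hypotheses, I take $\hat g^\sharp := \hat g = \iota_{A,C}$, whose image $A$ is contained in $B$ by the standing inclusion $A \subseteq B$, so both $Im(\hat g^\sharp) \subseteq B$ and $\hat g^\sharp \simeq \hat g$ hold trivially; and I take $\tilde g^\sharp := \iota_{C,D}: C \hookrightarrow D$, whose restriction $\tilde g^\sharp|_B$ is exactly $\iota_{B,D} = \tilde g$. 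Lemma \ref{lem_homotopy_equivalence} then delivers $A \simeq B \simeq C \simeq D$. The only mildly nontrivial step is the conjugation transporting $H_1$ from $A'$ to $A$, but this amounts to checking three immediate conditions; no substantial obstacle is anticipated.
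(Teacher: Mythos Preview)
Your proof is correct and follows essentially the same route as the paper: both verify the hypotheses of Lemma~\ref{lem_homotopy_equivalence} by conjugating through the given homeomorphisms so that the homotopy inverses $\hat g$ and $\tilde g$ become (or are homotopic to) inclusions, which makes the auxiliary conditions $Im(\hat g^{\sharp})\subseteq B$ and $\tilde g^{\sharp}|_{B}\simeq \tilde g$ trivial. Your presentation is slightly cleaner in that you perform the conjugation symmetrically at the outset (turning $A$ and $B$ themselves into weak deformation retracts), whereas the paper carries the conjugation inside the definitions of $\hat f,\hat g,\tilde f,\tilde g$ and treats the two halves asymmetrically, taking $\tilde g^{\sharp}=\tilde h|_{C}$ rather than an inclusion; but the substance is identical.
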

\begin{proof}
We verify the conditions of Lemma \ref{lem_homotopy_equivalence}. By assumption there exist
\begin{equation*}
\begin{split} 
\hat h\in  Homeo(C) 
\; \text{ and  } \;  
\hat f^{\prime}:C\longrightarrow \hat h(A),\;
\hat g^{\prime}:\hat h(A)\longrightarrow C
\end{split}
\end{equation*}
satisfying 
\begin{equation*}
\begin{split}
\hat g^{\prime}\circ \hat f^{\prime}\simeq id_{C}
\; \text{ and } \; 
\hat f^{\prime}\circ \hat g^{\prime }\simeq id_{\hat h(A)},
\; \text{ where } \; 
\hat g^{\prime}\simeq id_{\hat h(A)}.
\end{split}
\end{equation*}
Putting 
$\hat f = \hat h^{-}\circ \hat f^{\prime } \circ \hat h$
and
$\hat g = \hat h^{-}\circ \hat g^{\prime} \circ \hat h\lfloor_{A}$
we then have 
\begin{equation*}
\begin{split}
\hat g \circ \hat f \simeq id_{C}
\; \text{ and  } \; 
\hat f \circ \hat g\simeq id_{A},
\end{split}
\end{equation*}
i.e. $A \simeq C$ under $\hat f$, 
while 
\begin{equation*}
\begin{split}
\hat g
= 
\hat h^{-} \circ \hat g^{\prime} \circ \hat h\lfloor_{A}
\simeq 
\hat h^{-} \circ  \hat h\lfloor_{A}
=
id_{C}\lfloor_{A}
=
\hat g^{\sharp}
\; \text{ satisfies } \; 
Im(\hat g^{\sharp})=A\subseteq B.
\end{split}
\end{equation*}
Moreover by assumption there exist
\begin{equation*}
\begin{split}
\tilde h\in Homeo(D)
\; \text{ and } \; 
\tilde f^{\prime}:D\longrightarrow \tilde h(B),\;
\tilde g^{\prime}:\tilde h(B) \longrightarrow D
\end{split}
\end{equation*}
satisfying 
\begin{equation*}
\begin{split}
\tilde g^{\prime} \circ \tilde f^{\prime}\simeq id_{D}
\; \text{ and } \; 
\tilde f^{\prime} \circ \tilde g^{\prime} \simeq id_{\tilde h(B)},
\end{split}
\end{equation*}
where
$
\tilde g^{\prime} \simeq id_{\tilde h(B)}. 
$
Putting 
$\tilde f= \tilde h^{-}\circ \tilde f^{\prime}$
and
$\tilde g= \tilde g^{\prime} \circ \tilde h\lfloor_{B}$
we then have 
$\tilde g \circ \tilde f\simeq id_{D}$ 
and 
$\tilde f \circ \tilde g \simeq id_{B}$, while 
\begin{equation*}
\begin{split}
\tilde g^{\sharp}=\tilde h\lfloor_{C} \; \text{ satisfies } \;
\tilde g^{\sharp}\lfloor_{B}=\tilde h \lfloor_{B}
=
id_{\tilde h(B)}\circ \tilde h\lfloor_{B}
\simeq 
\tilde g^{\prime} \circ \tilde h\lfloor_{B}
=
\tilde g.
\end{split}
\end{equation*} 
\end{proof}

\subsection{The Sign of the Laplacian}\label{sec_sign_of_laplacian}
We consider a Morse function $K>0$ on $M$ and some critical point 
$0\simeq x_{0} \in \{\nabla K=0\}$,
i.e. $K$ takes in a suitable chart around $x_{0}$ the form
$$K(x)
=
K(x_{0})
+
b^{i}x_{i}^{2}.
$$
Clearly $\Delta K(x_{0})=2\sum_{i=1}^{n}b_{i}$. Let us consider for some $0<\delta \ll 1$ fixed
\begin{equation*}
\tilde K
=
\begin{cases}
K \; \text{ on } \; M \setminus B_{\delta}(x_{0}) \\
K(x_{0})+\sum_{i=1}^{n} \left[b_{i}\eta_{\delta}+c_{i}(1-\eta_{\delta})\right]x_{i}^{2}
\end{cases}
\end{equation*}
for some $c_{i}\in \R$ and a cut-off function 
$\eta_{\delta}=\eta(\frac{\vert x \vert}{\delta})$, where
\begin{equation*}
\eta \in C^{\infty}([0,\infty))
\; \text{ with } \; 
0\leq \eta^{\prime}\leq 2
,\; 
\eta \lfloor_{\left[ 0,1\right]}=0
\; \text{ and }\; 
\eta \lfloor_{\left[ 2,\infty\right)}=1.
\end{equation*}
Clearly 
$\tilde K(x_{0})=K(x_{0})$,
\begin{equation}\label{Change_The_Laplacian}
\Delta \tilde K(x_{0})=2\sum_{i=1}^{n}c_{i}
\end{equation}
and there holds
on $B_{2\delta}(x_{0})$
\begin{equation*}
\begin{split}
\partial_{j}\tilde K(x)
= &
2[b_{j}\eta_{\delta}(x)+c_{j}(1-\eta_{\delta}(x))]x_{j}
+ 
\delta^{-1}\sum_{i}(b_{i}-c_{i})\eta^{\prime}(\frac{\vert x \vert}{\delta})\frac{x_{j}}{\vert x \vert }x_{i}^{2}.
\end{split}
\end{equation*} 
In particular choosing 
$\forall_{1\leq j \leq n}
\vert c_{j}-b_{j}\vert \leq  \epsilon \vert b_{j}\vert$ 
for some fixed $\epsilon>0$
we find, that $\Vert K-\tilde K\Vert_{C^{0}}= O(\varepsilon \delta^{2})$ and 
\begin{equation*}
\begin{split}
\partial_{j}\tilde K(x)
= &
2b_{j}(1+O(\varepsilon))x_{j}
\end{split}
\end{equation*}
and hence $\{\nabla \tilde K=0\}=\{\nabla K=0\}$. By iteration we may therefore deform 
$K$ to any $\tilde K$ with the properties
\begin{enumerate}[label=(\roman*)]
\item 
 $\Vert \tilde K -K\Vert_{C^{0}}\ll 1$ and 
$\{\nabla \tilde K=0\}=\{\nabla K=0\}$ 
\item $\tilde K$ is a Morse function as is $K$
\item
$\tilde K=K$ on $\{ \nabla K=0 \} $
and close to any $x \in \{\nabla K=0\}$ there holds
 \begin{equation*}
W_{u}^{K}(x)=W_{u}^{\tilde K}(x)
\; \text{ and } \; W_{s}^{K}(x)=W_{s}^{\tilde K}(x)
 \end{equation*}
 for the stable and unstable manifolds $W_{u},W_{s}$ of $K,\tilde K$ of $x$.
\end{enumerate}
Note, that the value of $\Delta \tilde K$ 
at a non extremal critical point 
may be made positive or negative, cf. \eqref{Change_The_Laplacian}.


\begin{thebibliography}{flushleft}

\bibitem{Abbondandolo}
{
Abbondandolo A., Majer P.  
\textit
{  
Lectures on the Morse Complex for Infinite-Dimensional Manifolds.
} 
Morse Theoretic Methods in Nonlinear Analysis and in Symplectic Topology. NATO Science Series II: Mathematics, Physics and Chemistry, vol 217, (2006), Springer, Dordrecht
}


\bibitem{Ahmdou_Non_Simple_Blow_Up}
{
Ahmedou, M., Ben Ayed, M.
\textit{Non simple blow ups for the Nirenberg problem on half spheres.}
Discrete and Continuous Dynamical Systems, 2022, 42(12), 5967-6005
}

\bibitem{Ahmedou_Degree_Zero}
{
Ahmedou, M., Hichem Chtioui, H.
\textit
{
Conformal metrics of prescribed scalar curvature
on 4-manifolds: the degree zero case.
}
Arab. J. Math. (2017) 6:127-136
}


 
\bibitem{Bahri_Invariant}
{ 
Bahri A. 
\textit
{
An invariant for Yamabe type flows with applications
to scalar curvature problems in higher dimensions.
} 
Duke Mathematical Journal, 
81 (1996), 
323-466.
}


\bibitem{Bahri_Critical_Points_At_Infinity}
{ 
Bahri, A. 
\textit
{
Critical points at infinity in some variational problems.
}
Pitman Res. Not. in Math., 182. Longman Scientific \& Technical; copub. with John Wiley \& Sons, New York, 1989
}


\bibitem{Bahri_Coron_S_3}
{
Bahri A., Coron J-M. 
\textit{The Scalar-Curvature Problem on
the Standard Three- Dimensional Sphere.} 
Journal of Functional Analysis 
95 (1991) ,  106-172 
}




\bibitem{Ben_Ayed_M_4} 
{
Ben Ayed M., Chen Y., Chtioui H., Hammami M.
\textit{On the prescribed scalar curvature problem on 4-manifolds.}
Duke Mathematical Journal, 
84 (1996), 
633-677.
}
   
\bibitem{Bianchi_Min_Max}  
{
Bianchi G. 
\textit{
The scalar curvature equation on $\R^n$ and on $S^n$.
} 
Adv. Diff. Eq., 
1 (1996), 
857-880.
}

\bibitem{Bourguignon}
{
Bourguignon, J.P., Ezin, J.P. 
\textit
{
Scalar Curvature Functions in a Conformal Class of Metrics and Conformal Transformations.
} 
Trans. of the American Math. Soc., 301, no. 2 (1987)
}


\bibitem{Catrina_Symmetric}
{
Catrina, F.,  Wang, Z.Q. 
\textit
{Symmetric Solutions for the Prescribed Scalar Curvature Problem.}
Indiana University Mathematics Journal
Vol. 49, No. 2 ( 2000), pp. 779-813
}



\bibitem{Chang_S2_S3}
{
Chang S.A., Gursky M. J., Yang P.
\textit{The scalar curvature equation on 2- and 3-spheres.}
Calc. Var.PDE,
1 (1993),
205-229.
}

\bibitem{Chang_Conformal_Deformations} 
{
Chang S.A., Yang P.  
\textit
{
Conformal deformation of metrics on $S^2$.
} 
J.D.G., 
27 (1988), 
256-296.
}

\bibitem{Chang_S_2}
{
 Chang S.A., Yang P.  
\textit
{Prescribing Gaussian  curvature on $S^2$.} 
Acta Math., 
159 (1987), 
215-259.
}

\bibitem{Chen_Infinite_Energy_Blow_Up}
{  
Chen C.C., Lin C.S.
\textit
{
Blowing up with infinite energy of conformal metrics on $S^n$. 
}
Comm. Partial Differential Equations, 
24 (1999), 
no. 5-6, 785-799.
}



\bibitem{Escobar_Schoen_Deformation_Flatness} 
{
Escobar J., Schoen R.M. 
\textit
{
Conformal metrics with prescribed scalar curvature.
} 
Invent. Math., 
86 (1986), 
243-254.
} 



\bibitem{Kazdan_Warner_Gaussian}
{ 
Kazdan J.L., Warner F.
\textit
{ 
Existence and conformal deformation of metrics with prescribed Gaussian and scalar curvature
}
Ann. of Math., 
101 (1975), 
317-331.
}
 
\bibitem{Kazdan_Warner_Scalar}
{
Kazdan J.L., Warner F.  
\textit
{
Scalar curvature and conformal deformation of Riemannian structure.
} 
J. Differential Geometry, 
10 (1975), 
113-134.
}





\bibitem{MM1} 
{
Malchiodi A., Mayer M.
\textit{
Prescribing Morse scalar curvatures: blow-up analysis.}
Int.Mat.Res.Not., Vol. 2021, Issue 16, p. 12532-12612
rnaa021, 2020. 
}

\bibitem{MM2} 
{
Malchiodi A., Mayer M. 
\textit{
Prescribing Morse scalar curvatures: subcritical blowing-up solutions.
}
Journal of Differential Equations,
268 (2020),
no. 5, 2089-2124. 
}


\bibitem{MM4} 
{
Malchiodi A., Mayer M. 
\textit{Prescribing Morse scalar curvatures: pinching and Morse theory.}
Comm. in Pure and Appl. Math., Vol 76 (2023), Issue2, pp. 406-450
}


 
\bibitem{MM_Scf} 
{Mayer, M. 
\textit{A scalar curvature flow in low dimensions.} 
Calc. Var. Partial Differential Equations 56 (2017), no. 2, Art. 24, pp. 41 
}  



\bibitem{MM3}  
{
Mayer M.
\textit{
Prescribing Morse scalar curvatures: critical points at infinity.}
Advances in Calculus of Variations, vol. 15, no. 2, 2022, pp. 151-190
}

\bibitem{MM5} 
{
Mayer M.
\textit{Prescribing scalar curvatures: non compactness versus critical points at infinity.} 
Geometric Flows, 4(1), pp. 51-82
}

\bibitem{MM8}
{ 
Mayer, M.
\textit
{
Prescribing Morse scalar curvatures: the Gaussian case
}
To Appear
}

\bibitem{MM7}
{
Mayer M., Zhu C. 
\textit{Prescribing scalar curvatures: on the negative Yamabe case.}
preprint, \url{https://arxiv.org/abs/2302.02435}
}



 
\bibitem{Li_Part_1}
{
Li, Y. 
\textit{
Prescribing Scalar Curvature on $S^{n}$ and related problems, Part I.}
Journal of Differential Equations 120, 319-410 (1995)
}

\bibitem{Li_Part_2}
{
Li, Y. 
\textit{Prescribing scalar curvature on $S^{n}$ and related problems. II. Existence and compactness.} Comm. Pure Appl. Math. 49 (1996), no. 6, 541-597
}








\bibitem{Yacoub_Invariant_High_Dimensions}
{
Yacoub, R.
\textit{On the Scalar Curvature Equations in high dimension}
Adv. Nonl. Stud., 2 (2002), 373-393
}


\end{thebibliography}
\end{document}